\def\U{{\mathcal U}}
\def\R{{\mathbb R}}
\def\T{{\mathbb T}}
\def\t{{\mathfrak{t}}}
\def\H{{\mathcal H}}
\def\K{{\mathcal K}}
\def\W{{\mathcal W}}
\def\Z{{\mathbb Z}}
\def\Br{\operatorname{Br}}
\begin{document}

\title*{Recent advances in the study of the Equivariant Brauer Group}

\author{Peter Bouwknegt\inst{1,2}, Alan Carey\inst{1}\and Rishni Ratnam\inst{1}}

\institute{Mathematical Sciences Institute, Australian National University, Canberra ACT~0200, Australia,
\texttt{peter.bouwknegt@anu.edu.au, alan.carey@anu.edu.au, rishni.ratnam@anu.edu.au}
\and Department of Theoretical Physics, Research School of Physics and Engineering, 
Australian National University, Canberra ACT~0200, Australia} 


\maketitle

\begin{abstract}
In this paper we outline a recent construction of a Chern-Weil isomorphism for the equivariant 
Brauer group of $\R^n$ actions on a principal torus bundle, where the target for this 
isomorphism is a ``dimensionally reduced" \v Cech cohomology group. Using this latter 
group, we demonstrate how to extend the induced algebra construction to algebras 
with a non-trivial bundle as their spectrum.
\end{abstract}

\section{Introduction}

Let $Z$ be a locally compact Hausdorff space.  We fix a locally inner action 
$\operatorname{Ad}u$ of $\Z^n$ on $C_0(Z,\K)$, the C*-algebra of continuous functions
from $Z$ to $\K$, the compact operators on an infinite dimensional, separable, Hilbert space 
$\H$, vanishing at infinity.  I.e.\ $u: m\mapsto u^m \in U(\H)$, and acts by the adjoint 
action on $C_0(Z,\K)$ (recall $\operatorname{Aut}\,\K = PU(\H)$).
Recall the \emph{induced algebra}
\begin{align*}
\operatorname{Ind}_{\Z^n}^{\R^n}&(C_0(Z,\K),\operatorname{Ad}u) =\\ & \{\xi:\R^n
\to C_0(Z,\K):\xi(s+m) =\operatorname{Ad}u^{-m}(\xi(s)), s\in\R^n, m\in\Z^n\}.
\end{align*}
This algebra comes equipped with an action of $\R^n$ given by translations: 
$\alpha'_s(\xi)(t)=\xi(t-s)$. Since $\Z^n$ acts trivially on $Z$, 
\cite[Prop 6.16]{RaeWill98} shows that the spectrum of 
$\operatorname{Ind}_{\Z^n}^{\R^n}(C_0(Z,\K),\operatorname{Ad}u)$ is homeomorphic to 
$\T^n\times Z$, where $\T^n = \R^n/\Z^n$. In the converse direction, 
if $(A,\alpha)\in\mathfrak{Br}_{\R^n}(\T^n\times Z)$, 
then \cite{Ech90} implies there exists a locally inner action of $\Z^n$ on $C_0(Z,\K)$ such that 
$(A,\alpha)$ is $\R^n$-equivariantly isomorphic to $(\operatorname{Ind}_{\Z^n}^{\R^n}(C_0(Z,\K),
\operatorname{Ad}u),\alpha')$. It follows that, for an arbitrary principal $\T^n$-bundle $X$, 
all elements of $\mathfrak{Br}_{\R^n}(X)$ are \emph{locally} $\R^n$-equivariantly 
isomorphic to an induced algebra, but not globally if $\pi:X\to Z$ is a non-trivial bundle. 
The natural question that thus arises is; what is the sufficient data to sew 
together a collection of induced algebras in order to get a 
representative of every element of $\Br_{\R^n}(X)$, i.e.\ pairs in $\mathfrak{Br}_{\R^n}(X)$
up to outer conjugation, where $\pi:X\to \R^n\backslash X$  is a 
nontrivial principal $\R^n/\Z^n$-bundle?

The answer to this question is a cohomology group whose construction was motivated 
by studies into T-duality in String Theory. Let $Z$ be a $C^\infty$ manifold, and denote by 
$H^k_{dR}(Z,\t)$ the $k^{th}$ de Rham cohomology group with values in the Lie algebra 
$\t$ of $\T^n$. Now, suppose $\pi:X\to Z$ is our principal $\R^n/\Z^n$-bundle classified 
over some open cover $\W$ of $Z$ by $[F]\in \check{H}^2(\W,\underline{\Z}^n)$, and 
denote the image of the $F$ in $\Omega^2(Z,\R^n)$ by $F_2$, called a \emph{curvature} 
form of $\pi:X\to Z$.

Let $\wedge^i \t^*$ denote the $i^{th}$ exterior power of the dual Lie algebra 
$\t^*$. For $0\leq m\leq l\leq k$ we define a cochain complex
\begin{equation}\label{BHMComplex}
C_{F_2}^{k,(m,l)}(Z,\t^*):=\bigoplus_{i=m}^l\Omega^{k-i}(Z,\wedge^i \t^*),
\end{equation}
where $\Omega^{k-i}(Z,\wedge^i \t^*)$ is the group of differential $(k-i)$-forms with 
values in $\wedge^i \t^*$. Let $\{X_i\}_{i\in \{1,\dots ,n\} }$ and $\{X_i^*\}_{i\in \{1,\dots ,n\} }$ 
be a basis and dual basis for $\t$, and $\t^*$, respectively. Notice that $\{X_{j_1}^*
\wedge X_{j_2}^* \dots 
\wedge X_{j_i}^*:0\leq j_1<j_2<\dots < j_i\leq n\}$ is a basis for $\wedge^i \t^*$, 
so that every element $H_{(k-i)i}$ of $\Omega^{k-i}(Z,\wedge^i \t^*)$ can be written as a sum
\[\sum_{1\leq j_1<j_2<\dots < j_i\leq n}(\tilde{H}_{(k-i)i})_{j_1\dots j_i}\otimes 
(X_{j_1}^*\wedge\dots\wedge X_{j_i}^*)\,,\]
with $(\tilde{H}_{(k-i)i})_{j_1\dots j_i}\in\Omega^{k-i}(Z)$. To define a differential on 
the cochain complex (\ref{BHMComplex}), we first observe that on each group 
$\Omega^{k-i}(Z,\wedge^i \t^*)$ we can define maps
\[\wedge F_2:\Omega^{k-i}(Z,\wedge^i \t^*)\to \Omega^{k-i+2}(Z,\wedge^{i-1} \t^*)\]
by the formula
\begin{align*}
&H_{(k-i)i}\wedge F_2:=\\
&\sum_{1\leq j_1<j_2<\dots < j_i\leq n}\sum_{l=1}^i(-1)^{l+1}(\tilde{H}_{(k-i)i})_{j_1\dots j_i}
\wedge F_2(X_{j_l}^*)\otimes X_{j_1}^*\wedge\dots \wedge\widehat{X_{j_l}}\wedge
\dots\wedge X_{j_i}^*,\\
\end{align*}
where $\widehat{X_{j_l}}$ denotes omission of $X_{j_l}$. Then we have a differential 
$D_{F_2}$ on (\ref{BHMComplex}) given by
\begin{align*}
&D_{F_2}(H_{(k-m)m},\dots,H_{(k-l)l})=(dH_{(k-m)m}+(-1)^{k-m-1}H_{(k-m-1)(m+1)}\wedge F_2,\\
&\quad dH_{(k-m-1)(m+1)}+(-1)^{k-m-2}H_{(k-m-2)(m+2)}\wedge F_2, \dots,dH_{(k-l)l}),
\end{align*}
and we denote the resulting cohomology groups by $H^{k,(m,l)}_{F_2}(Z,\t^*)$. \medskip

\centerline{\xymatrix{
&&&&&\\
&&&&&\\
H_{k0}\ar[uu]^{d}&&&&&\\
&&&&&\\
&&H_{(k-1)1}\ar[uu]^{d}\ar[uuuull]_{\wedge F_2}&&&\\
&&&&&\\
&&&&H_{(k-2)2}\ar[uu]^{d}\ar[uuuull]_{\wedge F_2}& \ar@{-->}[uul]
}} \medskip

\noindent The groups $H^{k,(m,l)}_{F_2}(Z,\t^*)$ are interesting because they fit into 
a ``Gysin" sequence:
\begin{theorem}[\cite{BouHanMat05}]\label{BHMGysin}
There is an exact sequence
\begin{equation}
\to H^{k,(m,m)}_{F_2}(Z,\t^*)\stackrel{\pi^*}{\to} H^{k,(m,l)}_{F_2}(Z,\t^*)\stackrel{\pi_*}{\to} 
H^{k,(m+1,l)}_{F_2}(Z,\t^*)\stackrel{\wedge F_2}{\to} H^{k+1,(m,m)}_{F_2}(Z,\t^*)\to
\end{equation}
where the map $\wedge F_2:H^{k,(m+1,l)}_{F_2}(Z,\t^*)\to H^{k+1,(m,m)}_{F_2}(Z,\t^*)$ is
given by 
\[[(H_{(k-m-1)(m+1)},\dots,H_{(k-l)l})]\wedge F_2:= [(-1)^{k-m}H_{(k-m-1)(m+1)}\wedge F_2].\]
Moreover, there is an isomorphism $H^k_{dR}(X)\cong H^{k,(0,k)}_{F_2}(Z,\t^*)$.
\end{theorem}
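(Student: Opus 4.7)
The plan is to derive the Gysin sequence from a short exact sequence of cochain complexes, and to obtain the isomorphism $H^k_{dR}(X) \cong H^{k,(0,k)}_{F_2}(Z,\t^*)$ from the geometry of connections on principal torus bundles.

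For the Gysin sequence, observe that the operator $\wedge F_2$ strictly decreases the exterior-power index $i$, so the single-summand truncation $C^{*,(m,m)}_{F_2}(Z,\t^*)$ sits inside $C^{*,(m,l)}_{F_2}(Z,\t^*)$ as a $D_{F_2}$-subcomplex, with quotient precisely $C^{*,(m+1,l)}_{F_2}(Z,\t^*)$. Taking $\pi^*$ to be the inclusion that pads zeros and $\pi_*$ to be the projection that drops the $i=m$ component gives a short exact sequence of cochain complexes, and the associated long exact sequence in cohomology is the stated Gysin sequence. To identify the connecting homomorphism, I would run the standard zig-zag: lift a cocycle $(H_{(k-m-1)(m+1)},\dots,H_{(k-l)l}) \in C^{k,(m+1,l)}_{F_2}$ to $(0,H_{(k-m-1)(m+1)},\dots,H_{(k-l)l}) \in C^{k,(m,l)}_{F_2}$ and apply $D_{F_2}$. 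The cocycle condition annihilates every output component except the leading one, namely $(-1)^{k-m-1}H_{(k-m-1)(m+1)}\wedge F_2$, which together with the conventional snake-lemma sign yields exactly the $(-1)^{k-m}$ factor built into the theorem's definition of $\wedge F_2$ on cohomology.

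For the de Rham isomorphism, I would fix a principal connection $\theta = \sum_j \theta_j \otimes X_j \in \Omega^1(X,\t)$ on $\pi:X\to Z$ whose curvature is $d\theta = \pi^* F_2$; such a $\theta$ exists because $F_2$ is by construction a curvature form for the class $[F]$. Since $\T^n$ is compact and connected, Haar averaging shows that the inclusion $(\Omega^*(X)^{\T^n},d) \hookrightarrow (\Omega^*(X),d)$ is a quasi-isomorphism, so $H^*_{dR}(X) \cong H^*(\Omega^*(X)^{\T^n},d)$. Using $\theta$ to trivialise the vertical cotangent bundle, every $\T^n$-invariant $k$-form on $X$ admits a unique expansion
\[ \omega = \sum_{I \subset \{1,\dots,n\}} \pi^*(\alpha_I) \wedge \theta_{i_1} \wedge \dots \wedge \theta_{i_{|I|}}, \qquad \alpha_I \in \Omega^{k-|I|}(Z), \]
producing a graded isomorphism $\Omega^k(X)^{\T^n} \cong C^{k,(0,k)}_{F_2}(Z,\t^*)$ under the correspondence $\alpha_I \leftrightarrow \alpha_I \otimes X^*_{i_1}\wedge\dots\wedge X^*_{i_{|I|}}$. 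A direct computation of $d\omega$ using the Leibniz rule together with $d\theta_j = \pi^* F_2(X^*_j)$ then reproduces $D_{F_2}$ on the right-hand side, completing the identification.

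The main obstacle is sign bookkeeping: the Koszul signs arising from permuting 1-forms past 2-forms, the alternating $(-1)^{l+1}$ in the definition of $\wedge F_2$, and the global $(-1)^{k-i-1}$ in $D_{F_2}$ must all line up. A secondary subtlety is independence of the connection choice: two connections differ by a form pulled back from $\Omega^1(Z,\t)$, which changes $F_2$ by an exact form and is implemented by a canonical chain isomorphism between the resulting twisted complexes, so the cohomology is intrinsically $H^k_{dR}(X)$.
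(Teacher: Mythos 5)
The paper does not prove this theorem itself --- it is cited from \cite{BouHanMat05} --- so there is no in-paper argument to compare against. Your proposal is, as far as I can tell, the standard route and essentially the one taken in that reference.

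For the Gysin sequence: your observation that $\wedge F_2$ strictly lowers the exterior-power index, so that $C^{*,(m,m)}_{F_2}$ is a $D_{F_2}$-subcomplex of $C^{*,(m,l)}_{F_2}$ with quotient $C^{*,(m+1,l)}_{F_2}$, is correct, and one checks directly that the quotient differential agrees with the $(m+1,l)$-truncation's $D_{F_2}$ because the coefficient of $H_{(k-m-2)(m+2)}\wedge F_2$ is $(-1)^{k-(m+1)-1}=(-1)^{k-m-2}$ in both. The long exact sequence of this short exact sequence of complexes, with $\pi^*$ the zero-padding inclusion and $\pi_*$ the projection, is the stated sequence. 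For the de Rham identification, Haar averaging onto $\T^n$-invariant forms and the expansion $\omega=\sum_I\pi^*(\alpha_I)\wedge\theta_{i_1}\wedge\cdots\wedge\theta_{i_{|I|}}$ relative to a connection with $d\theta=\pi^*F_2$ is correct, and the Leibniz computation does reproduce the $(-1)^{l+1}$ inside $\wedge F_2$ together with the outer $(-1)^{k-i-1}$ of $D_{F_2}$.

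Two things you should state more carefully rather than wave at. First, the bare zig-zag gives the connecting map $[(H_{(k-m-1)(m+1)},\dots)]\mapsto[(-1)^{k-m-1}H_{(k-m-1)(m+1)}\wedge F_2]$, one global sign off from the theorem's $(-1)^{k-m}$; there is no universal ``conventional snake-lemma sign'' that silently supplies this, so you should say explicitly that both sign conventions for the connecting homomorphism are in use, that replacing $\delta$ by $-\delta$ preserves exactness, and that BHM simply fix the other choice. Second, you should record why a connection with curvature equal to the chosen representative $\pi^*F_2$ (not merely a cohomologous form) exists: the space of connections is affine over $\Omega^1(Z,\t)$ and every two representatives of the de Rham class $[F_2]$ differ by an element of $d\Omega^1(Z,\t)$, so any representative is attained.
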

In \cite{BouHanMat05} the authors argue that the ``integration over the fibres map" 
$H^3_{dR}(X)\cong H^{3,(0,3)}_{F_2}(Z,\t^*)\stackrel{\pi_*}{\to} H^{3,(1,3)}_{F_2}(Z,\t^*)$ 
computes the T-dual curvature of a pair $(\pi:X\to Z,[H])$, where $[H]\in H^3_{dR}(X)$ is an 
H-flux. For the purposes of this exposition however, what is of interest is the remarkable 
resemblance of Theorem \ref{BHMGysin} with the Gysin sequence of Packer, Raeburn and 
Williams, which we now describe.

Consider the case where $G$ is a second countable locally compact Hausdorff abelian 
group acting on a locally compact space $X$ with orbit space $Z=G\backslash X$. 
Suppose also 
that $N\subset G$ is a closed subgroup such that $G\to N$ and $\hat{G}\to\hat{N}$ 
(where $\hat G$ denotes the Poincar\'e dual to $G$) have 
local sections, and $\pi:X\to Z$ is a principal $G/N$-bundle. For any element 
$[CT(X,\delta),\alpha]\in\Br_G(X)$ there is an open cover $\{U_{\lambda_0}\}$ of $X$ 
and isomorphisms $\Phi_{\lambda_0}:CT(X,\delta)|_{U_{\lambda_0}}\to 
C_0(U_{\lambda_0},\K)$. Moreover, it follows as above that 
$\Phi_{\lambda_0}\circ\alpha|_N\circ\Phi_{\lambda_0}$ is locally inner. 
In \cite{RaeWil93ddclasses} and \cite{PacRaeWill96} the authors develop a cohomology 
theory $H^k_{G}(X,{\mathcal S})$, where $\mathcal S$ denotes the sheaf of germs of 
continuous $\T$-valued functions,  that in degree 2 is isomorphic to the subgroup of 
$\mbox{Br}_G(X)$ such that the restriction of the actions to $N$ are locally unitary. 
The failure of $\Phi_{\lambda_0}\circ\alpha|_N\circ\Phi_{\lambda_0}$ to be locally 
unitary at $x\in X$ is measured by a Mackey obstruction $f\in C(X/G,H_M^2(N,\T))$, 
which is trivial if and only if the restriction of $\alpha$ to $N$ is locally unitary for all 
$x\in X$. (We will describe the Mackey obstruction for principal torus bundles in 
subsection \ref{Mackeysectiononeonetwo} below). Packer, Raeburn and Williams 
call such systems \emph{$N$-principal}.

Let $\U=\{U_{\lambda_0}\}$ be an open cover of $X$ by $G$-invariant sets, and 
define a two column cochain complex $C^{kj}_G(\U,{\mathcal S})$ as follows:
$$
C^{k0}_G(\U,{\mathcal S}):=\check{C}^k(\U,{\mathcal S}),\quad
C^{(k-1)1}_G(\U,{\mathcal S}):= Z_G^1(G,\check{C}^{k-1}(\U,{\mathcal S})),
$$
where $Z_G^1(G,\check{C}^{k-1}(\U,{\mathcal S}))$ denotes the set of \emph{continuous} 
group cohomology 1-\emph{cocycles} from $G$ into the $G$-module 
$\check{C}^{k-1}(\U,{\mathcal S})$ with the obvious $G$ action. In other words, an 
element $\eta\in C^{(k-1)1}_G(\U,{\mathcal S})$ is a collection of continuous functions 
$\eta_{\lambda_0\dots\lambda_{k-1}}: G\times U_{\lambda_0\dots\lambda_{k-1}}\to \T$ 
such that, for all $g_0,g_1\in G$ and $x\in U_{\lambda_0\dots\lambda_{k-1}}$, the 
following holds:
\[\eta_{\lambda_0\dots\lambda_{k-1}}(g_1,x)\eta_{\lambda_0\dots\lambda_{k-1}}
(g_0g_1,x)^*\eta_{\lambda_0\dots\lambda_{k-1}}(g_0,g_1^{-1}x)=1.\]
This complex has as horizontal differential the usual group cohomology differential $\partial_G$, 
and as vertical differential the usual \v{C}ech differential $\check{\partial}$. \medskip

\centerline{\xymatrix{
&\\
\check{C}^2(\U,{\mathcal S})\ar[r]^(0.4){\partial_G}\ar[u]^{\check\partial} & 
Z_G^1(G,\check{C}^{2}(\U,{\mathcal S}))\ar[u]^{\check\partial}\\
\check{C}^1(\U,{\mathcal S})\ar[r]^(0.4){\partial_G}\ar[u]^{\check\partial} & 
Z_G^1(G,\check{C}^{1}(\U,{\mathcal S}))\ar[u]^{\check\partial}\\
\check{C}^0(\U,{\mathcal S})\ar[r]^(0.4){\partial_G}\ar[u]^{\check\partial} & 
Z_G^1(G,\check{C}^{0}(\U,{\mathcal S}))\ar[u]^{\check\partial}
}} \medskip

\noindent The cohomology group $H^k_{G}(\U,{\mathcal S})$ is then the cohomology 
$Z^k_{G}(\U,{\mathcal S})/B^k_{G}(\U,{\mathcal S})$ of the total complex
\[\left(\oplus_{j=0}^1 C^{(k-j)j}_G(\U,{\mathcal S}), \check\partial\oplus (-1)^{k-j}
\partial_G\right).\]
Then, after showing refinement maps induce canonical maps on cohomology 
\cite[Sect.~2]{PacRaeWill96}, the authors define
$H^k_{G}(X,{\mathcal S}):=\varinjlim_{\U}H^k_{G}(\U,{\mathcal S}).$

Consider now the Packer-Raeburn-Williams Gysin sequence:
\begin{theorem}[{\cite[Thm 4.1]{PacRaeWill96}}]\label{PRWGysin}
Let $G$ be a locally compact abelian group and $N$ a closed subgroup 
such that $G\to N$ and $\hat{G}\to\hat{N}$ have local sections. Let ${\mathcal N}$ 
and $\mathcal{\hat{N}}$ denote the sheaves of germs of continuous $N$ and 
$\hat{N}$-valued functions respectively. Then for any principal $G/N$-bundle 
$\pi:X\to Z$ with Euler vector $c\in \check{H}^2(Z,{\mathcal N})$ there is a 
long exact sequence
\[\dots\to \check{H}^k(Z,{\mathcal S})\stackrel{\pi^*_G}{\to} 
H^k_G(X,{\mathcal S})\stackrel{\pi_*}{\to} 
\check{H}^{k-1}(Z,\mathcal{\hat{N}})\stackrel{\cup c}{\to}\check{H}^{k+1}(Z,{\mathcal S})\to\dots\]
The sequence starts with $\check{H}^1(Z,{\mathcal S})$, 
and $\pi^*_G:\check{H}^1(Z,{\mathcal S})\to \check{H}^1_G(X,{\mathcal S})$ is injective.
\end{theorem}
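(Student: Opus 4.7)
The plan is to run the Čech-filtered spectral sequence of the two-column double complex defining $H^k_G(\U,\mathcal{S})$. Since the complex has only two columns ($j\in\{0,1\}$), the $E_2$-page has only two non-zero rows, the spectral sequence collapses at $E_3$, and its convergence to $H^k_G(X,\mathcal{S})$ is equivalent to a long exact sequence of exactly the shape claimed once the two rows of $E_2$ are identified and the $d_2$-differential is computed. Throughout one works with sufficiently fine $G$-invariant open covers $\U=\{U_{\lambda_0}\}$ of $X$ and passes to the direct limit at the end.

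\emph{Identifying $E_2^{p,0}$.} The $E_0$-differential is $\partial_G$, so $E_1^{p,0}$ is the $G$-invariant subgroup of $\check{C}^p(\U,\mathcal{S})$. Since each $U_{\lambda_0}$ is $G$-invariant and equals $\pi^{-1}(V_{\lambda_0})$ for an induced open cover $\mathcal{V}=\{V_{\lambda_0}\}$ of $Z$, $G$-invariant continuous $\T$-valued functions on $U_{\lambda_0\ldots\lambda_p}$ descend to $V_{\lambda_0\ldots\lambda_p}$. Passing to $d_1=\check{\partial}$-cohomology and taking the direct limit gives $E_2^{p,0}=\check{H}^p(Z,\mathcal{S})$, with the natural map being $\pi^*_G$.

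\emph{Identifying $E_2^{p,1}$.} The term $E_1^{p,1}$ is the continuous group cohomology $H^1(G,\check{C}^p(\U,\mathcal{S}))$. Because the $G$-action on $X$ factors through the free $G/N$-action, the restriction $\eta\mapsto\eta|_N$ of a 1-cocycle lands in continuous homomorphisms $N\to\check{C}^p(\U,\mathcal{S})^{G/N}=\check{C}^p(\mathcal{V},\mathcal{S})$, i.e.\ $\hat{\mathcal{N}}$-valued Čech $p$-cochains on $\mathcal{V}$. The local-section hypothesis on $\hat{G}\to\hat{N}$ conversely extends any such local cochain to an honest $G$-crossed homomorphism (set $\eta(g)(x):=s(\phi(\pi(x)))(g)$), yielding a bijection modulo coboundaries. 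The would-be extra Lyndon--Hochschild--Serre contribution $H^1(G/N,\check{C}^p(\U,\mathcal{S}))$ vanishes locally on $Z$ because the free $G/N$-action makes $\check{C}^p(\U,\mathcal{S})$ into a coinduced $G/N$-module, whose positive-degree cohomology vanishes by Shapiro's lemma. Taking $\check{\partial}$-cohomology and direct limit thus yields $E_2^{p,1}=\check{H}^p(Z,\hat{\mathcal{N}})$.

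\emph{Computing $d_2$.} Given $[\eta]\in E_2^{p-1,1}$ represented by local $\hat{N}$-cochains on $\mathcal{V}$, choose local extensions to $\hat{G}$-cochains on each $U_{\lambda_0\ldots\lambda_{p-1}}$ via the local section of $\hat{G}\to\hat{N}$; on overlaps these lifts differ by an $\mathcal{N}$-valued Čech 2-cocycle on $Z$ whose class is, by construction of the Euler class of a principal $G/N$-bundle, precisely $c\in\check{H}^2(Z,\mathcal{N})$. Applying the total differential to the chosen lift and projecting to the first column then pairs $\eta$ against this cocycle via the canonical evaluation $\hat{\mathcal{N}}\otimes\mathcal{N}\to\mathcal{S}$, giving $d_2[\eta]=[\eta]\cup c$.

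I expect the main technical obstacle to be \emph{Identifying $E_2^{p,1}$}: making the Lyndon--Hochschild--Serre argument rigorous in the continuous category requires balancing the local section of $\hat{G}\to\hat{N}$ against the free $G/N$-action on $X$, and is precisely the step in which all the structural hypotheses of the theorem are used. Injectivity of $\pi^*_G:\check{H}^1(Z,\mathcal{S})\to H^1_G(X,\mathcal{S})$ at the bottom of the sequence then follows from exactness, since $E_2^{-1,1}=\check{H}^{-1}(Z,\hat{\mathcal{N}})=0$.
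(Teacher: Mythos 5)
The paper does not supply a proof of this statement; it cites \cite[Thm~4.1]{PacRaeWill96} and, immediately after the theorem, remarks that ``exactness of Theorem \ref{PRWGysin} is a very difficult computation involving complicated formulas, in particular at $\check{H}^{k-1}(Z,\hat{\mathcal{N}})$.'' The PRW argument is therefore a direct cocycle computation, whereas you propose a spectral-sequence argument: filter the two-column total complex by column degree, obtain a two-row spectral sequence converging to $H^{k}_G(X,\mathcal{S})$, and read off the long exact sequence once the rows and $d_2$ are identified. That is a genuinely different and conceptually attractive route; it buys you exactness for free, pushing all the work into the two identifications and the $d_2$-computation. Your identification of $E_2^{p,0}\cong\check{H}^p(Z,\mathcal{S})$ is fine (using that $G$-invariant \v Cech cochains on the saturated cover descend), and your explanation of why the injectivity claim at the bottom follows from exactness is correct.

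There is, however, a concrete error in your description of $d_2$. You lift a \v Cech $(p-1)$-cocycle $\phi$ with values in $\hat{\mathcal{N}}$ to a $G$-cocycle $\eta(g)(x):=\hat{\phi}(\pi(x))(g)$ by composing with a local section $s:\hat{N}\to\hat{G}$, and then assert that ``on overlaps these lifts differ by an $\mathcal{N}$-valued \v Cech 2-cocycle whose class is $c$.'' This cannot be: two lifts of an $\hat{N}$-valued function to $\hat{G}$ differ by a $\widehat{G/N}$-valued function, and consequently $\check{\partial}\hat{\phi}$ lies in $\check{C}^{p}(\mathcal{V},\widehat{\mathcal{G}/\mathcal{N}})$, \emph{not} in anything $N$-valued. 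The Euler cocycle $F_{ijk}=s_{jk}s_{ij}s_{ik}^{-1}\in N$ has nothing to do with the section $\hat{G}\to\hat{N}$; it comes from the local sections of $G\to G/N$, which you must invoke at the \emph{next} step of the zig-zag, namely when you exhibit the $G$-cocycle $\chi(g)(x):=(\check{\partial}\hat{\phi})(\pi(x))(g)$ as $\partial_G\nu$ for some $\nu\in\check{C}^p(\U,\mathcal{S})$. One builds $\nu$ from a local $G$-valued lift $\tilde{w}_{\lambda}$ of the $G/N$-valued bundle coordinates $w_{\lambda}$, and it is precisely the failure of those lifts to agree on overlaps that produces $F$ inside $\check{\partial}\nu$; the pairing $\hat{\mathcal{N}}\otimes\mathcal{N}\to\mathcal{S}$ then gives $d_2[\phi]=[\phi\cup c]$. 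You have conflated two distinct lifting problems; the final formula is right but the mechanism you give for it is wrong. Separately, the Lyndon--Hochschild--Serre/Shapiro argument identifying $E_2^{p,1}$ with $\check{H}^p(Z,\hat{\mathcal{N}})$ is stated in the continuous-cochain category, where both the existence of the LHS sequence and Shapiro's lemma for coinduced modules are not automatic and require exactly the local-section and topological hypotheses of the theorem; you flag this as the main obstacle, correctly, but as it stands it is where most of the content of \cite{PacRaeWill96} would have to be re-derived.
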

This exact sequence is a \v{C}ech cohomology version of Theorem \ref{BHMGysin} 
in the case that $l=2$. Unlike the proof of Theorem \ref{BHMGysin} however, 
exactness of Theorem \ref{PRWGysin} is a very difficult computation involving 
complicated formulas, in particular at $\check{H}^{k-1}(Z,\mathcal{\hat{N}})$.

The analogue of the conclusion $H^3_{dR}(X)\cong H^{3,(0,3)}_{F_2}(Z,\t^*)$ 
is the following Lemma, which in some sense makes a statement about the integer 
analogue of the group $H^{3,(0,2)}_{F_2}(Z,\t^*)$ (see Theorem \ref{PRWagreement} 
and Theorem \ref{MainSquare}):
\begin{lemma}[{\cite[Lemma 1.3]{PacRaeWill96}}]\label{IsotoKerM}
Suppose that $G$ is a locally compact abelian group, and $N$ a closed subgroup 
such that $\hat{G}\to\hat{N}$ has local sections. Suppose $\pi:X\to Z$ is a locally 
trivial principal $G/N$-bundle over a paracompact space $Z$ and let 
$\operatorname{M}:\operatorname{Br}_G(X)\to C(G\backslash X,H^2_M(N,\mathbb T))$ 
denote the Mackey Obstruction map. 
Then $H_G^2(X,{\mathcal S})\cong \ker \operatorname{M}$.
\end{lemma}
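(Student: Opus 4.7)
The plan is to construct mutually inverse maps between $\ker \operatorname{M}$ and $H^2_G(X,\mathcal{S})$ by extracting, respectively assembling, local cochain data. The guiding idea is that an element $[CT(X,\delta),\alpha] \in \operatorname{Br}_G(X)$ lies in $\ker \operatorname{M}$ precisely when $\alpha|_N$ is locally unitary, in which case the pair can be encoded simultaneously by the usual \v Cech 2-cocycle representing the Dixmier--Douady class $\delta$ together with an extra 1-cochain capturing how the locally chosen unitary implementations of $\alpha$ fail to patch across overlaps. These two pieces of data together are exactly an element of $Z^2_G(\U,\mathcal{S})$, and the ambiguities coming from choices of trivialization, local unitaries, and refinements should correspond to the total coboundaries.

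First I would fix a representative $(A,\alpha) \in \ker \operatorname{M}$ together with a sufficiently fine $G$-invariant open cover $\U = \{U_{\lambda_0}\}$ of $X$ and trivializations $\Phi_{\lambda_0}: A|_{U_{\lambda_0}} \to C_0(U_{\lambda_0},\K)$. Because $\alpha|_N$ is locally unitary on $\ker \operatorname{M}$, after passing to a refinement one can choose continuous unitary-valued $v_{\lambda_0}: N \times U_{\lambda_0} \to U(\H)$ implementing $\Phi_{\lambda_0} \circ \alpha|_N \circ \Phi_{\lambda_0}^{-1}$. The projective transition unitaries of the $\Phi_{\lambda_0}$ on double overlaps yield, on triple overlaps, the Dixmier--Douady cocycle $\omega \in \check{C}^2(\U,\mathcal{S})$. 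The full $G$-action, after conjugation by the $\Phi_{\lambda_0}$, gives local implementations $G \times U_{\lambda_0} \to \operatorname{Aut}(C_0(U_{\lambda_0},\K))$, whose phase discrepancy across double overlaps assembles into a 1-cochain $\eta_{\lambda_0 \lambda_1}: G \times U_{\lambda_0 \lambda_1} \to \T$ lying in $Z^1_G(G,\check{C}^1(\U,\mathcal{S}))$. A direct computation then shows $(\omega,\eta)$ satisfies $\check\partial \omega = 0$ and $\partial_G \omega \pm \check\partial \eta = 0$, giving a class $\Psi([A,\alpha]) \in H^2_G(X,\mathcal{S})$.

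For the inverse direction, given $[(\omega,\eta)] \in H^2_G(X,\mathcal{S})$, I would build a CT bundle from $\omega$ via the standard Dixmier--Douady construction and define a $G$-action on it by gluing local actions using $\eta$ as patching data. The group cocycle identity for $\eta$ guarantees that the glued map is a genuine $G$-action, while its restriction to $N$ is visibly locally unitary (implemented by the unitaries recovered from the $N$-part of $\eta$), so the resulting class lies in $\ker \operatorname{M}$.

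The main obstacle will be the bookkeeping required to verify that these two maps are well defined and mutually inverse on cohomology classes, not merely on cochains. One must check that different choices of cover, trivializations $\Phi_{\lambda_0}$, and local unitaries $v_{\lambda_0}$ modify $(\omega,\eta)$ only by a total coboundary of the form $(\check\partial \mu, \partial_G\mu \pm \check\partial\nu)$ for some $\mu \in \check{C}^1(\U,\mathcal{S})$ and $\nu \in Z^1_G(G,\check{C}^0(\U,\mathcal{S}))$, and that outer-conjugate systems produce cohomologous classes. This amounts to tracking how a continuous $\T$-valued perturbation of the local unitaries shifts $\omega$ and $\eta$ simultaneously in a compatible way, which is the technical heart of the correspondence.
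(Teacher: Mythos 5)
The paper quotes this lemma from \cite[Lemma 1.3]{PacRaeWill96} and does not supply a proof, so there is nothing in the paper to compare yours against directly. That said, your sketch follows the right general strategy --- trivialize $(A,\alpha)$ over a $G$-invariant cover, harvest a Dixmier--Douady $2$-cocycle $\omega$ and a group $1$-cocycle $\eta$ recording the action, argue that $(\omega,\eta)$ assembles to a well-defined class in $H^2_G(X,\mathcal S)$, and show the correspondence is invertible --- and this is essentially the Packer--Raeburn--Williams approach.

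The sketch nonetheless contains a genuine gap. You claim the local implementations $G\times U_{\lambda_0}\to\operatorname{Aut}\,\K$ comparing $\Phi_{\lambda_0}\circ\alpha\circ\Phi_{\lambda_0}^{-1}$ with $\tau$ have a ``phase discrepancy'' on overlaps that assembles into $\eta\in Z^1_G(G,\check C^1(\U,\mathcal S))$. But $\operatorname{Aut}\,\K$-valued maps carry no phase; to produce a $\T$-valued group cochain you must first lift the local implementations to $U(\H)$-valued $\tau$-cocycles $u_{\lambda_0}:G\to C(U_{\lambda_0},U(\H))$, i.e.\ exhibit each local system as exterior equivalent to $(C_0(U_{\lambda_0},\K),\tau)$. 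This is precisely where the hypothesis $[A,\alpha]\in\ker\operatorname M$ does its real work (not merely to furnish unitaries for $\alpha|_N$ as in your second paragraph): were the Mackey obstruction nonzero, no such $\tau$-cocycle could exist, since its restriction to $N$ would unitarily implement $\alpha|_N$. Your inverse map has a parallel problem --- $\eta$ is defined only on double overlaps, so it cannot ``glue local actions''; rather one takes $\tau$ on each $U_{\lambda_0}$ and uses $\eta$ together with the transition unitaries from $\omega$ to build a compatible family of exterior equivalences that globalizes. Supplying these two lifting and gluing steps, and then checking coboundary-invariance as you note, is the technical content that separates the sketch from a proof.
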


In order to generalise Lemma \ref{IsotoKerM} to all of $\Br_G(X)$ 
(which is the integer analogue of $H^{3,(0,3)}_{F_2}(Z,\t^*)$), one may choose to 
study a three-column complex defined by setting
$C^{k0}_G(\U,{\mathcal S}):=\check{C}^k(\U,{\mathcal S}),$ and
$$
C^{(k-1)1}_G(\U,{\mathcal S}):= C_G^1(G,\check{C}^{k-1}(\U,{\mathcal S})),\quad
C^{(k-2)2}_G(\U,{\mathcal S}):= Z_G^2(G,\check{C}^{k-2}(\U,{\mathcal S}))\,.
$$
However, such a complex is unable to go beyond $\ker\operatorname{M}$, because 
the horizontal differential requires $\U$ to consist of $G$-invariant sets, and 
\cite[Corollary 5.18]{PacRaeWill96} \footnote{The corollary is incorrect as the isomorphism
claimed there is only a surjection.} implies continuous trace algebras trivialisable 
over $G$-invariant sets have trivial Mackey obstruction.

In this exposition, we discuss the appropriate integer \v{C}ech cohomology analogue of 
$H^{3,(0,3)}_{F_2}(Z,\t^*)$. More details can be found elsewhere 
\cite{BouRat1,BouCarRat2}. We then use this cohomology group to present a new 
construction: a representative of every class of $\Br_{\R^n}(X)$ that is \emph{explicitly} 
locally an induced algebra (see Section \ref{UniversalSection}).

\section{Preliminaries}\label{Mackeysectionchapter0}

\subsection[Continuous Trace C*-Algebras]{Continuous Trace C*-Algebras}\label{ctstracesection}

In this paper we are only concerned with separable and stable algebras and so our 
discussion is a greatly restricted version of the usual one for which the reader 
should consult \cite{RaeWill98}.
With $X$ as in the introduction, let $p:E\to X$ be a (locally trivial) vector bundle over 
$X$ with fibre $\K$ (the compact operators
on a separable, infinite dimensional Hilbert space $\H$), and structure group 
$\operatorname{Aut}\K$ (with the point-norm topology).  Let $\Gamma(E,X)$ be the 
$*$-algebra of all continuous sections of $p:E\to X$. Then
\[\Gamma_0(E,X):=\{f\in \Gamma(E,X): x\mapsto ||f(x)|| \mbox{ vanishes at } \infty\}\]
is a $C^*$-algebra with respect to point-wise operations and the sup norm \cite[Prop. 4.89]{RaeWill98}.
\begin{definition}
A separable C*-algebra $A$ with {spectrum} 
 $X$ is called \emph{continuous trace} if it is $C_0(X)$-isomorphic to $\Gamma_0(E,X)$, 
 for some (locally trivial) vector bundle $p: E\to X$ with fibre $\K$, and structure group 
 $\operatorname{Aut}\K$.
\end{definition}

{{} Now, recall the \emph{Brauer group} $\mbox{Br}(X)$ of $X$, which is the group of 
$C_0(X)$-isomorphism classes of continuous trace C*-algebras with spectrum $X$ 
where the zero element is the $C_0(X)$-isomorphism class of $C_0(X,\K)$ and the 
group operation is the balanced tensor product $A\cdot B:= A\otimes_{C_0(X)}B$.
(We are using here the standard abuse of notation by confusing a continuous trace C*-algebra 
$A$ with its $C_0(X)$-isomorphism class). By \cite[Prop 4.53]{RaeWill98}, 
isomorphism classes of vector bundles with fibre $\K$ and structure group 
$\operatorname{Aut}\K$ are in one-to-one correspondence with $\check{H}^1(X,{\mathcal A})$, 
where ${\mathcal A}$ is the sheaf of germs of continuous $\operatorname{Aut}\K$-valued 
functions. If we equip the unitary operators $U(\H)$ with the strong operator topology, 
then there is an exact sequence of topological groups
\begin{equation}\label{Uexactsequence}
1\to \T\to U(\H)\to \operatorname{Aut}\K\to 1,
\end{equation}
such that $U(\H)\to \operatorname{Aut}\K$ has local continuous sections 
\cite[Ch 1]{RaeWill98}. Consequently, the long exact sequence in sheaf 
cohomology, combined with the fact that $U(\H)$ is contractible 
(in the strong operator topology) \cite[Thm 4.72]{RaeWill98} implies that 
$\check{H}^1(X,{{\mathcal A}})\cong \check{H}^2(X,{{\mathcal S}})$.
{}From the exact sequence $0\to \Z\to \R\to \T\to 1,$
we then obtain
$\check{H}^1(X,{{\mathcal A}})\cong \check{H}^2(X,{{\mathcal S}})\cong 
\check{H}^3(X,\underline{\Z}).$
As continuous trace C*-algebras are $C_0(X)$-isomorphic if and only if they are 
$C_0(X)$-isomorphic to the algebra of sections of isomorphic bundles, 
we obtain the \emph{Dixmier-Douady} classification \cite{DixDou63}.
Namely,
if $A$ is a continuous trace C*-algebra with spectrum $X$, $C_0(X)$-isomorphic to
continuous sections of a bundle $p:E\to X$, then there is an isomorphism 
$\mathrm{Br}(X)\to \check{H}^3(X,\underline{\Z})$ defined by the map induced by sending $A$ 
to the image of $p:E\to X$ in $\check{H}^3(X,\underline{\Z})$.
The corresponding unique (up to $C_0(X)$-isomorphism) continuous trace 
C*-algebras with image $\delta\in\check{H}^3(X,\underline{\Z})$ will be denoted 
$CT(X,\delta)$. We now move on to the \emph{equivariant} Brauer group. 

\begin{definition}
Fix a locally compact group $G$ and a second countable locally compact Hausdorff (left) 
$G$-space $X$. Denote the induced $G$-action on $C_b(X)$ by $\tau$. That is
$\tau_g(f)(x)=f(g^{-1}x),$ for $ f\in C_b(X).$
Let $A$ be a continuous trace C*-algebra with spectrum $X$, and $\alpha$ an action 
of $G$ on $A$. Then $\alpha$ is said to \emph{preserve the (given) action on the spectrum} 
if for all $g\in G$, $a\in A$ and $f\in C_b(X)$:
$\alpha_g(f\cdot a)=\tau_g(f)\cdot \alpha_g(a).$
\end{definition}
We write $\mathfrak{Br}_G(X)$ for the collection of pairs $(A,\alpha)$, where $A$ is a 
continuous trace C*-algebra with spectrum a $G$-space $X$ and $\alpha$ is an action 
of $G$ on $A$ that preserves the given action on the spectrum.
If $A$ is a C*-algebra, we denote by $M(A)$ and $UM(A)$ the \emph{multiplier algebra} 
of $A$ and unitary elements of $M(A)$ respectively. 
Now, given elements $(A,\alpha)$ and $(A,\beta)$ of $\mathfrak{Br}_G(X)$, we say 
the actions $\alpha$ and $\beta$ are \emph{exterior equivalent} if there is a (strictly) 
continuous map $w:G\to UM(A)$ such that
\begin{align}
\label{exteriorcondition1}\beta_g(a)&=w_g\alpha_g(a)w_g^* \quad \mbox{ for all } 
a\in A \mbox{ and } g\in G,
\\
\label{exteriorcondition2}w_{gh}&=w_g\overline{\alpha}_g(w_h)\quad \mbox{ for all } g,h\in G.
\end{align}
In this case, one says that $w$ is a \emph{unitary} $\alpha$ \emph{cocycle} implementing 
the equivalence. Two pairs $(A,\alpha)$ and $(B,\beta)$ are \emph{outer conjugate} if 
and only if there is a $C_0(X)$-isomorphism $\phi:A\to B$ such that $\alpha$ and 
$\phi^{-1}\circ \beta\circ \phi$ are exterior equivalent.
 {{} With the notation of the previous definition we have:
\begin{definition}
The \emph{equivariant Brauer group} $\operatorname{Br}_G(X)$  is $\mathfrak{Br}_G(X)$
modulo outer conjugacy. The zero element is the equivalence class of 
$(C_0(X,\K),\tau)$,  the binary operation is
$[A,\alpha]\cdot[B,\beta]=[A\otimes_{C_0(X)}B,\alpha\otimes_X \beta]$ and
 the inverse of $[A,\alpha]$ is
the conjugate algebra $[\overline{A},\overline{\alpha}]$ (we use the canonical
bijection of sets $\flat:A\to \overline{A}$, to define $\overline{\alpha}_g(\flat(a)):=\flat(\alpha_g(a))$.
\end{definition}
Later on we will use the ``forgetful homomorphism" $F:\mbox{Br}_G(X)\to \mbox{Br}(X)$ 
that sends $[CT(X,\delta),\alpha]$ to $[CT(X,\delta)]$. This map is neither surjective 
nor injective in general.

\subsection{$G/N$-bundles}\label{bundles}

Let $G$ be a locally compact topological group acting on the left of a locally compact space 
$X$, with orbit space $Z=G\backslash X$. Suppose further that $N\subset G$ is a closed 
subgroup that acts trivially on $X$, and that $\pi:X\to Z$ is a principal $G/N$-bundle. Here 
we assume, and we shall do so for the rest of this article, that all bundles are \emph{locally trivial}. 
Local triviality means there is an open cover $\W=\{W_i\}$ of $Z$ and there 
exist continuous local sections 
$\sigma_i:W_i\to\pi^{-1}(W_i)$. Moreover, since $X$ is a \emph{principal} bundle, the action of 
$G/N$ is free (\cite[Example 4.60]{RaeWill98}), and the local sections define continuous 
$G/N$-equivariant maps $w_i:\pi^{-1}(W_i)\to G/N$ by
\[w_i(x)^{-1}x=\sigma_i(\pi(x)), \quad x\in \pi^{-1}(W_i).\]
In turn, these equivariant maps give local $G/N$-equivariant homeomorphisms 
$h_i:\pi^{-1}(W_i)\to G/N\times W_i$ defined by
\[h_i(x):=(w_i(x),\pi(x)),\]
which we call \emph{local trivialisations}. Employing the notation $W_{ij}:=W_i\cap W_j$, 
the local trivialisations allow us to define so-called \emph{transition functions} 
$t_{ij}:W_{ij}\to G/N$ by
\[h_j\circ h_i^{-1}([s]_{G/N},z)=(t_{ij}(z)[s]_{G/N},z),\quad z\in Z, [s]_{G/N}\in G/N,\]
and one can check they satisfy the two identities
\begin{align*}
&t_{ij}(\pi(x))=w_i(x)^{-1}w_j(x), \quad \mbox{ and}\\
&t_{ij}(z)\sigma_j(z)=\sigma_i(z).
\end{align*}
The transition functions define an element $[t]$ in the first \v{C}ech cohomology 
group $\check{H}^1(Z,\mathcal{G}/\mathcal{N})$, where $\mathcal{G}/\mathcal{N}$ 
is the sheaf of germs of continuous $G/N$-valued functions, and the 
map $[X,\pi,Z]\mapsto [t]$ defines a bijective correspondence between isomorphism 
classes of principal $G/N$-bundles and $\check{H}^1(Z,\mathcal{G}/\mathcal{N})$ 
(see, e.g., \cite[Prop 4.53]{RaeWill98}).

If the projection $G\to G/N$ has local sections we can pick (perhaps after a refinement 
of the cover) local lifts $s_{ij}:W_{ij}\to G$ that satisfy, for all $x\in W_{ij}$,  
$[s_{ij}(x)]_{G/N}=t_{ij}(x)$, and define $[F]\in \check{H}^2(\mathcal{W},\mathcal{N})$ 
by $F_{ijk}:=s_{jk}s_{ij}s_{ik}^{-1}$. The hypothesis of the existence of local lifts holds 
for many cases of interest, such as whenever $G=\R^k\times \T^m\times \Z^n\times F$, 
for some finite group $F$, regardless of $N$ (cf. \cite{PacRaeWill96}). In particular, 
when $G=\R^n$, $N=\Z^n$, the fact that $\mathcal{R}^n$ (the sheaf of germs of 
continuous $\R^n$-valued functions) is ``fine" implies for any $k\geq 1$ that 
$\check{H}^k(Z,\mathcal{R}^n)\cong 0$, and the long exact sequence in sheaf 
cohomology implies there is an isomorphism
\[ \Delta:\check{H}^1(Z,\mathcal{G}/\mathcal{N})\stackrel{\cong}{\to} \check{H}^2(Z,\underline{\Z}^n).\]

\begin{definition}
We shall call $ \Delta[t] \in\check{H}^2(Z,\underline{\Z}^n)$ the \emph{Euler vector} 
of the principal $\R^n/\Z^n$-bundle.
\end{definition}

\subsection{Mackey Obstruction for Principal $\T^n$-bundles}\label{Mackeysectiononeonetwo}

Fix an action $\alpha$ of $\R^n$ on on $CT(X,\delta)$ which covers the fibre action 
for a principal $\T^n$-bundle $\pi:X\to Z$. This data gives an element $[CT(X,\delta),\alpha]$ 
of the equivariant Brauer group $\Br_{\R^n}(X)$. A commonly used obstruction of the action 
$\alpha$ is the \emph{Mackey obstruction}. The Mackey obstruction 
$f\in C(\R^n\backslash X,H_M^2(\Z^n,\T))$ 
is trivial if and only if the restriction of $\alpha$ to $\Z^n$ is locally unitary for all $x\in X$. 
Here we give a simple formula. In the work 
below, the symbol $M_n^u(\T)$ denotes the group, under addition, of strictly upper triangular 
$n\times n$ matrices with entries in $\T$.

Choose an open cover $\{U_{\lambda_0}\}_{\lambda_0\in \mathcal{I}}$ of $X$ so that there 
are isomorphisms
$\Phi_{\lambda_0}:CT(X,\delta)\vline_{U_{\lambda_0}}\stackrel{\cong}{\longrightarrow}
 C_0(U_{\lambda_0},\K).$ Since $\Z^n$ acts trivially on the spectrum $X$, 
\cite[Prop 2.1]{EchNes01} implies the actions $\Phi_{\lambda_0}\circ\alpha|_{\Z^n}\circ
\Phi^{-1}_{\lambda_0}$ on $C_0(U_{\lambda_0},\K)$ are locally inner. 
Perhaps after a refinement of $\{U_{\lambda_0}\}$, there exist $u_{\lambda_0}^{i}\in
 C(U_{\lambda_0},U(\H))$ that implement the actions:
\begin{equation}\label{chapter0eq0}
\Phi_{\lambda_0}\circ\alpha_{e_i}\circ\Phi^{-1}_{\lambda_0}(h)
(x)=u_{\lambda_0}^i(x)h(x)u_{\lambda_0}^i(x)^* \quad h\in C_0(U_{\lambda_0},\K) \,,
\end{equation}
where $e_i$ is the $i$-th basis element of $\mathbb Z^n$.
Now define a function $f_{\lambda_0}\in C(U_{\lambda_0}, M_n^u(\T))$ by
\begin{equation}\label{MackeyDefn}
f_{\lambda_0}(x)_{ij}u_{\lambda_0}^i(x)u_{\lambda_0}^j(x)=u_{\lambda_0}^j(x)u_{\lambda_0}^i(x)\,,
\qquad i\leq j\,.
\end{equation}
One can prove that that $f_{\lambda_0}$ is in fact independent of $\lambda_0$, and 
constant on orbits of $X$, and thus defines a global function $f\in C(Z,M_n^u(\T))$. 
The function $f$ is called the Mackey obstruction of $\alpha$.

\subsection{Phillips-Raeburn Obstruction}\label{phillipsraeburnsection}

Let $\hat{\Z}^n$ denote the Pontryagin dual of $\Z^n$, and $\mathcal{\hat{N}}$ the 
sheaf of germs of continuous $\hat{\Z}^n$-valued functions. Suppose that we have 
$[C_0(X,\K),\alpha]\in \Br_{\R^n}(X)$, as above, and $\alpha$ has vanishing Mackey 
obstruction. Then there exists an open cover $\U=\{U_{\lambda_0}\}_{\lambda_0\in 
\mathcal{I}}$ of $X$ and \emph{homomorphisms} $u_{\lambda_0}:\Z^n\to 
C(U_{\lambda_0},U(\H))$ such that
\[(\alpha_{m}h)(x)=u_{\lambda_0}^m(x)h(x)u_{\lambda_0}^m(x)^*,\quad h\in C_0(X,\K)\,,\qquad
m\in\mathbb Z^n\,. \]
In this case the action $\alpha$ is called \emph{locally unitary}. One can then 
measure the obstruction to $\alpha$ being implemented by a global homomorphism
$u:\Z^n\to C(X,U(\H))$ via the cocycle $\eta_{\lambda_0\lambda_1}\in \check{Z}^1
(\U,\mathcal{\hat{N}})$ defined by
\begin{equation}\label{Philrae}
\eta_{\lambda_0\lambda_1}^m(x):= u_{\lambda_1}^m(x)
u_{\lambda_0}^m(x)^*,\quad x\in U_{\lambda_0\lambda_1}.
\end{equation}
The class $[\eta]$ is called the \emph{Phillips-Raeburn obstruction} of $\alpha$ 
\cite{PhiRae79}, and clearly $\alpha$ can be implemented globally if and only 
if $[\eta]=0\in \check{H}^1(X,\mathcal{\hat{N}})$.

\section{Dimensionally Reduced Cohomology}\label{DimRedCohoSection}

Here we define a series of  ``dimensionally reduced cohomology" groups in order to 
generalise the dimensionally reduced de Rham cohomology groups from 
\cite{BouHanMat05} and to extend Lemma \ref{IsotoKerM} to all of $\Br_G(X)$. 
{}From here on we specialize the discussion to $G=\mathbb R^n$, and $N=\mathbb Z^n$.

Let $Z$ be a $C^\infty$ manifold and fix an open cover ${{\mathcal W}}=\{W_{\mu_0}\}$ of 
$Z$ together with a cocycle $F\in \check{Z}^2({{\mathcal W}},\underline{\Z}^n)$. 
Let $\mathcal{S}, \hat{\mathcal{N}}$ and $\mathcal{M}$ denote the sheaves of 
germs of continuous $\T,\hat{\Z}^n$ and $M_n^u(\T)$-valued functions respectively. 
We can think of a \v{C}ech cocycle in $\phi^{(k-2)2}\in \check{C}^{k-2}(\W,{\mathcal M})$ 
as an $n \choose 2$-tuple $\{\phi^{(k-2)2}(\cdot)_{ij}\}_{1\leq i<j\leq n},$
where $\phi^{(k-2)2}(\cdot)_{ij}\in \check{C}^{k-2}(\W,\mathcal{S})$.

We define a cochain complex
$C^k_F({{\mathcal W}},{{\mathcal S}}),$
for $k\geq 2$ as all triples 
\begin{equation*}
(\phi^{k0},\phi^{(k-1)1},\phi^{(k-2)2})\,,
\end{equation*}
consisting of \v{C}ech cochains 
$\phi^{k0}\in \check{C}^{k}(\W,\mathcal{S})$, $\phi^{(k-1)1}\in \check{C}^{k-1}(\W,\hat{\mathcal{N}})$ and 
$\phi^{(k-2)2}\in \check{C}^{k-2}(\W,\mathcal{M})$. When $k=1$, we define a cochain to be a pair 
$(\phi^{10},\phi^{01
})$, where $\phi^{10}\in \check{C}^{1}(\W,\mathcal{S})$, $\phi^{01}\in \check{C}^{0}(\W,\hat{\mathcal{N}})$, 
whilst when $k=0$ a cochain is an element  $\phi^{00}\in \check{C}^{0}(\W,\mathcal{S})$.

Now, for any $A\in \check{C}^2(\W,\underline{\Z}^n)$ and $B\in \check{C}^3(\W,\underline{M_n^u(\Z)})$ 
we can define products
\begin{align*}
\cup_1 A: &\ \check{C}^{k-1}(\W,\hat{\mathcal{N}})\to \check{C}^{k+1}(\W,\mathcal{S})\,,\\
\cup_1 A: & \  \check{C}^{k-2}(\W,\mathcal{M}) \to \check{C}^{k}(\W,\hat{\mathcal{N}})\,,\\
\cup_2 B: & \ \check{C}^{k-2}(\W,\mathcal{M}) \to \check{C}^{k+1}(\W,\mathcal{S})\,,
\end{align*}
by the formulas
\begin{align*}
(\phi^{(k-1)1} & \cup_1 A)_{\lambda_0\dots\lambda_{k+1}}(z):= 
\phi^{(k-1)1}_{\lambda_0\dots\lambda_{k-1}}({A_{\lambda_{k-1}\lambda_k\lambda_{k+1}}(z),z)}\,,
\nonumber \\
(\phi^{(k-2)2}& \cup_1 A)_{\lambda_0\dots\lambda_{k+1}}(m,z):= \nonumber  \\ & \prod_{1\leq i<j\leq n}
\phi^{(k-2)2}_{\lambda_0\dots\lambda_{k-1}}(z)_{ij}^{A_{\lambda_{k-1}\lambda_k
\lambda_{k+1}}(z)_i(m)_j-(m)_iA_{\lambda_{k-1}\lambda_k\lambda_{k+1}}(z)_j},
\nonumber \\
(\phi^{(k-2)2} & \cup_2 B)_{\lambda_0\dots\lambda_{k+1}}(z):=\prod_{1\leq i<j\leq n}
\phi^{(k-2)2}_{\lambda_0\dots\lambda_{k-2}}(z)_{ij}^{B_{\lambda_{k-2}\lambda_{k-1}
\lambda_{k}\lambda_{k+1}}(z)_{ij}}.
\end{align*}
where $m_l$ denotes the $l^{th}$ component of $m\in\Z^n$.
The \v{C}ech differential $\check\partial$ is a graded derivation with respect to these 
products \cite{BouRat1}.
Now, let $F_i$ denote the $i^{th}$ component of $F$, our fixed representative of the 
Euler vector of $\pi:X\to Z$, and define a 3-cochain $C(F)\in \check{C}^3(\W,\underline{M_n^u(\Z)})$ 
by the formula:
\[C(F)_{\lambda_0\lambda_1\lambda_2\lambda_3}(z)_{ij}:=F_{\lambda_0\lambda_1\lambda_2}(z)_i
F_{\lambda_0\lambda_2\lambda_3}(z)_j-F_{\lambda_1\lambda_2\lambda_3}(z)_i
F_{\lambda_0\lambda_1\lambda_3}(z)_j\,.\]
By \cite[Sect 2]{Ste47}, $C(F)$ has the property that $\check\partial C(F)_{ij}=F_i\cup F_j-F_j\cup F_i$. 
This property implies the map $D_F: C^k_F({\mathcal W},{\mathcal S})\to C^{k+1}_F({\mathcal W},
{\mathcal S})$ defined by
\begin{align}  \label{dfdefn}
D_F & (\phi^{k0}, \phi^{(k-1)1},\phi^{(k-2)2}):=
(\check\partial\phi^{k0}\times(\phi^{(k-1)1}\cup_1 F)^{(-1)^{k+1}} \nonumber \\ & 
\times(\phi^{(k-2)2}\cup_2 C(F)))^{(-1)^{k+1}},
\quad\check\partial\phi^{(k-1)1}\times(\phi^{(k-2)2}\cup_1 F)^{(-1)^{k}},\check\partial\phi^{(k-2)2})\notag.
\end{align}
satisfies $D_F^2=0$. \medskip

\centerline{\xymatrix{
\check{C}^{k+1}(\W,\mathcal{S})&&&\\
&&&\\
\check{C}^{k}(\W,\mathcal{S})\ar[uu]^{\check\partial}&&\check{C}^{k}(\W,\hat{\mathcal{N}})&&\\
&&\\
&&\check{C}^{k-1}(\W,\hat{\mathcal{N}})\ar[uu]^(0.4){\check\partial}\ar[uuuull]^{\cup_1 F}&&
\check{C}^{k-1}(\W,\mathcal{M})\\
&&&&\\
&&&&\check{C}^{k-2}(\W,\mathcal{M})\ar[uu]^{\check\partial}\ar[uuuull]_{\cup_1 F}
\ar[uuuuuullll]_(0.75){\cup_2 C(F)}
}}\medskip

\noindent Thus we have:
\begin{definition}
The $k^{th}$-dimensionally reduced \v{C}ech cohomology group of the covering $\W$ 
with coefficients in ${\mathcal S}$,
is the cohomology of $C^k_F({{\mathcal W}},{{\mathcal S}})$  under the differential $D_F$. 
 This group is denoted  
${{\mathbb H}}^k_F(\W,{\mathcal S}).$
\end{definition}
We can also define similarly, ${\mathbb H}^k_F(\W,\underline{\Z})$ 
and ${\mathbb H}^k_F(\W,{\mathcal  R})$, 
using integer  and real coefficients.
 Cochains in $C^k_F(\W,\underline{\Z})$ are triples $(\phi^{k0},\phi^{(k-1)1},\phi^{(k-2)2})$ 
 consisting of  \v{C}ech cochains $\phi^{k0}\in \check{C}^{k}(\W,\underline{\Z})$, $\phi^{(k-1)1}
 \in \check{C}^{k-1}(\W,\underline{\Z}^n)$ and $\phi^{(k-2)2}\in \check{C}^{k-2}(\W,\underline{M_n^u(\Z)})$. 
We define degree 0 and 1 cochains as before, by truncating the lower \v{C}ech cochains. 
Then we have maps
\begin{align*}
\cup_1 F:  &\  \check{C}^{k-1}(\W,\underline{\Z}^n)\to \check{C}^{k+1}(\W,\underline{\Z})\,,\\
\cup_1 F:  & \  \check{C}^{k-2}(\W,\underline{M_n^u(\Z)}) \to \check{C}^{k}(\W,\underline{\Z}^n)\,, \\
\cup_2 C(F): &\ \check{C}^{k-2}(\W,\underline{M_n^u(\Z)}) \to \check{C}^{k+1}(\W,\underline{\Z})\,,
\end{align*}
with their integer cohomology analogues:
\begin{align*}
(\phi^{(k-1)1} & \cup_1 F)_{\lambda_0\dots\lambda_{k+1}}(z) :=
\sum_{l=1}^n\phi^{(k-1)1}_{\lambda_0\dots\lambda_{k-1}}(z)_l F_{\lambda_{k-1}
\lambda_k\lambda_{k+1}}(z)_l,\\
(\phi^{(k-2)2}& \cup_1 F)_{\lambda_0\dots\lambda_{k}}(z)_l := \\
& \sum_{1\leq i<j\leq n}\phi^{(k-2)2}_{\lambda_0\dots\lambda_{k-2}}(z)_{ij}    
(F_{\lambda_{k-2}\lambda_{k-1}\lambda_{k}}(z)_i(e_l)_j-(e_l)_iF_{\lambda_{k-2}\lambda_{k-1}
\lambda_{k}}(z)_j),\\
(\phi^{(k-2)2}& \cup_2 C(F))_{\lambda_0\dots\lambda_{k+1}}(z):=
\sum_{1\leq i<j\leq n}\phi^{(k-2)2}_{\lambda_0\dots\lambda_{k-2}}(z)_{ij} 
C(F)_{\lambda_{k-2}\lambda_{k-1}\lambda_{k}\lambda_{k+1}}(z)_{ij},
\end{align*}
Then the differential in integer coefficients is
\begin{align*}
D_F & (\phi^{k0}, \phi^{(k-1)1},\phi^{(k-2)2}):=
(\check\partial\phi^{k0}+(-1)^{k+1}\phi^{(k-1)1}\cup_1 F \\& +(-1)^{k+1}\phi^{(k-2)2}\cup_2 C(F),
\check\partial\phi^{(k-1)1}+(-1)^{k}\phi^{(k-2)2}\cup_1 F,\check\partial\phi^{(k-2)2}) \,.
\end{align*}

\begin{definition}
Fix a cocycle $F\in\check{Z}^2(\W,\underline{\Z}^n)$. The \emph{$k^{th}$ dimensionally reduced 
\v{C}ech cohomology group of the cover $\W$ with coefficients in $\Z$} is the cohomology of 
$C^k_F({\mathcal W},\underline{\Z})$.
\end{definition}
{{}
Similarly, for real coefficients, cochains in $C^k_F(\W,{\mathcal  R})$ are 
triples \[(\phi^{k0},\phi^{(k-1)1},\phi^{(k-2)2})\] consisting of  \v{C}ech cochains 
$\phi^{k0}\in \check{C}^{k}(\W,{\mathcal  R})$, $\phi^{(k-1)1}\in \check{C}^{k-1}(\W,{\mathcal  R}^n)$ 
and $\phi^{(k-2)2}\in \check{C}^{k-2}(\W,\mathcal{M}({\mathcal  R})),$ where ${\mathcal  R}$ denotes 
the sheaf of germs of continuous $\R$-valued functions, and $\mathcal{M}({\mathcal  R})$, the sheaf 
of germs of continuous $M_n^u(\R)$-valued functions.}

As expected, the short exact sequence of groups $0\to \Z\to\R\to\T\to 1$ induces a long 
exact sequence in dimensionally reduced cohomology. As with ordinary \v{C}ech cohomology, 
most of the groups ${\mathbb H}^k_F(\W,{\mathcal  R})$ are trivial. Computing the non-trivial 
ones using a contracting homotopy gives
{{}
\begin{corollary}\label{dimreducedlongexact}
Let $\W$ be a good open cover of a $C^\infty$ manifold $Z$, and fix a cocycle 
$F\in \check{Z}^2(\W,\underline{\Z}^n)$. Then we have exact sequences
\begin{align*}
0&\to C(Z,\Z)\to C(Z,\R)\to C(Z,\T)\to  {\mathbb H}^1_{F}(\W,\underline{\Z})
\to C(Z,\R^n)\to {\mathbb H}^1_{F}(\W,{\mathcal S})\\ & \quad \to {\mathbb H}^2_{F}(\W,\underline{\Z})
 \to C(Z,M_n^u(\R))\to {\mathbb H}^2_{F}(\W,{\mathcal S})\to {\mathbb H}^3_{F}(\W,\underline{\Z})\to 0,
\end{align*}
and
$\quad 0\to {\mathbb H}^k_{F}(\W,{\mathcal S})\to 
{\mathbb H}^{k+1}_{F}(\W,\underline{\Z})\to 0,\quad k\geq 3.$
\end{corollary}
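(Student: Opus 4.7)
The plan is to obtain the corollary as the long exact sequence in dimensionally reduced cohomology induced by the coefficient short exact sequence $0\to \underline{\Z}\to \mathcal{R}\to \mathcal{S}\to 0$ (together with the analogous sequences $0\to \underline{\Z}^n\to \mathcal{R}^n\to \hat{\mathcal{N}}\to 0$ and $0\to \underline{M_n^u(\Z)}\to \mathcal{M}(\mathcal{R})\to \mathcal{M}\to 0$), once the groups $\mathbb{H}^k_F(\W,\mathcal{R})$ have been pinned down. Termwise exactness of $0\to C^\bullet_F(\W,\underline{\Z})\to C^\bullet_F(\W,\mathcal{R})\to C^\bullet_F(\W,\mathcal{S})\to 0$ holds because $\W$ is good (so the stalkwise surjection $\mathcal{R}\to \mathcal{S}$ lifts to sections on every intersection), and the differential $D_F$ descends because its nonstandard pieces $\cup_1 F$ and $\cup_2 C(F)$ are $\Z$-linear in their left arguments.

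The crux is to prove $\mathbb{H}^0_F(\W,\mathcal{R})=C(Z,\R)$, $\mathbb{H}^1_F(\W,\mathcal{R})=C(Z,\R^n)$, $\mathbb{H}^2_F(\W,\mathcal{R})=C(Z,M_n^u(\R))$, and $\mathbb{H}^k_F(\W,\mathcal{R})=0$ for $k\geq 3$. Goodness of $\W$ makes the fine sheaves $\mathcal{R}$, $\mathcal{R}^n$ and $\mathcal{M}(\mathcal{R})$ acyclic, so $\check\partial$ admits contracting homotopies in each of the three columns of $C^\bullet_F(\W,\mathcal{R})$. I would then run a diagonal chase: starting from a $k$-cocycle $(\phi^{k0},\phi^{(k-1)1},\phi^{(k-2)2})$ with $k\geq 3$, the column-$2$ equation $\check\partial\phi^{(k-2)2}=0$ supplies $\psi$ with $\phi^{(k-2)2}=\check\partial\psi$; subtracting $D_F(0,0,\psi)$ annihilates the column-$2$ entry and modifies columns $1$ and $0$ by $\pm\psi\cup_1 F$ and $\pm\psi\cup_2 C(F)$. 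The new column-$1$ entry is now $\check\partial$-closed in $\check{C}^{k-1}(\W,\mathcal{R}^n)$ and can be killed by a further $D_F(0,\psi',0)$ adjustment, and the same trick handles column $0$. For $k=0,1,2$ the chase terminates at \v{C}ech degree zero: the surviving bottom-column piece is a $\check\partial$-closed $0$-cochain, i.e., a global section of the appropriate sheaf, and this reads off the three listed groups.

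Feeding the computation into the long exact sequence produces precisely the displayed eleven-term sequence, with $C(Z,\R)$, $C(Z,\R^n)$, $C(Z,M_n^u(\R))$ occupying the $\mathcal{R}$-positions and the sequence terminating at $\mathbb{H}^3_F(\W,\underline{\Z})$ because $\mathbb{H}^3_F(\W,\mathcal{R})=0$; in the same way, the simultaneous vanishing of $\mathbb{H}^k_F(\W,\mathcal{R})$ and $\mathbb{H}^{k+1}_F(\W,\mathcal{R})$ for $k\geq 3$ collapses the remainder of the sequence into the isomorphisms $\mathbb{H}^k_F(\W,\mathcal{S})\cong \mathbb{H}^{k+1}_F(\W,\underline{\Z})$. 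The main obstacle is keeping track of signs and correction terms in the diagonal chase: each modification produces $\cup_1 F$- and $\cup_2 C(F)$-correction cochains in adjacent columns, and verifying that the modified cochain remains a cocycle at each stage uses the derivation property of $\check\partial$ over $\cup_1$ and $\cup_2$ together with Steenrod's identity $\check\partial C(F)_{ij}=F_i\cup F_j-F_j\cup F_i$---exactly the inputs that make $D_F^2=0$, and so the only algebraic content required beyond the acyclicity of the three column sheaves.
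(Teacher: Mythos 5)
Your proposal matches the paper's own approach: the authors obtain the corollary from the long exact sequence induced by $0\to\Z\to\R\to\T\to 1$ on dimensionally reduced cohomology, together with a contracting-homotopy computation of the nontrivial $\mathbb{H}^k_F(\W,\mathcal{R})$ (yielding $C(Z,\R)$, $C(Z,\R^n)$, $C(Z,M_n^u(\R))$ in degrees $0,1,2$ and vanishing in degrees $\geq 3$). Your ``diagonal chase'' using fine-sheaf acyclicity of $\mathcal{R}$, $\mathcal{R}^n$, $\mathcal{M}(\mathcal{R})$, the derivation property of $\check\partial$ over $\cup_1,\cup_2$, and Steenrod's identity is precisely the contracting homotopy the paper alludes to, so the argument is correct and essentially identical.
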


In order to mimic Theorem \ref{PRWGysin}, we introduce a generalisation of 
$\check{H}^{k-1}(Z,\mathcal{\hat{N}})$. Define a cochain complex
$\overline{C}^k_F(\mathcal{W},\mathcal{S})$, 
where for $k\geq 1$ an element is a pair $(\varphi^{k1},\varphi^{(k-1)2})$ consisting of 
\v{C}ech cochains $\phi^{k1}\in \check{C}^{k}(\W,\hat{\mathcal N})$ and $\phi^{(k-1)2}\in 
\check{C}^{k-1}(\W,\mathcal{M})$. A cochain in $\overline{C}^0_F(\mathcal{W},\mathcal{S})$ 
is given by $(\varphi^{01})$, for $\phi^{01}\in \check{C}^{0}(\W,\hat{\mathcal N})$. This complex 
has a differential $\overline{D}_F$ given by
\begin{equation}
\label{overdfdefn}\overline{D}_F(\phi^{k1},\phi^{(k-1)2}):=(\check\partial\phi^{(k-1)1}
\times(\phi^{(k-2)2}\cup_1 F)^{(-1)^{k}},\check\partial\phi^{(k-2)2}).
\end{equation}
\begin{definition}
We define $\overline{\mathbb{H}}^k_F(\mathcal{W},\mathcal{S})$ to be the cohomology of 
$\overline{C}^k_F(\mathcal{W},\mathcal{S})$  under the differential $\overline{D}_F$.
\end{definition}
One can see this group is obtained by removing the first entry from 
$C^{k+1}_{F}(\W,\mathcal{S})$. Obviously, one can define the 
same groups with integer and real coefficients, denoted 
$\overline{\mathbb{H}}^k_F(\mathcal{W},\underline{\Z})$ and 
$\overline{\mathbb{H}}^k_F(\mathcal{W},\mathcal{R})$ respectively. If $\W$ is good, 
then we can define maps in analogue to the Gysin sequence maps from 
Theorem \ref{PRWGysin}. Indeed, we define
\begin{align*}
\pi^*: &\check{H}^{k}(\W,\mathcal{S})\to \mathbb{H}^{k}_{F}(\W,\mathcal{S})\\
&[\phi^{k0}]\mapsto [\phi^{k0},1,1]\\
\pi_*: &\mathbb{H}^{k}_{F}(\W,\mathcal{S})\to \overline{\mathbb{H}}^{k-1}_{F}(\W,\mathcal{S})\\
&[\phi^{k0},\phi^{(k-1)1},\phi^{(k-2)2}]\mapsto [\phi^{(k-1)1},\phi^{(k-2)2}]\\
\cup F:&\overline{\mathbb{H}}^{k-1}_{F}(\W,\mathcal{S})\to \check{H}^{k+1}(\W,\mathcal{S})\\
&[\phi^{(k-1)1},\phi^{(k-2)2}]\mapsto [(\phi^{(k-1)1}\cup_1 F)^{(-1)^{k+2}
}\times(\phi^{(k-2)2}\cup_2 C(F))^{(-1)^{k+2}}].
\end{align*}
Notice that the map $\cup F$ is defined using the fixed representative 
$F\in \check{Z}^2(Z,\underline{\Z}^n)$. That $\W$ is good ensures that the above 
maps commute with the long exact sequence maps from Corollary \ref{dimreducedlongexact} 
(and their analogue for $\overline{\mathbb{H}}^k_F(\W,\bullet)$). 
We then have our analogue of Theorem \ref{PRWGysin}:

\begin{theorem}\label{MyGysin}
Let $\W$ be a good open cover of a $C^\infty$ manifold $Z$, and fix a cocycle 
$F\in \check{Z}^2(\W,\underline{\Z}^n)$. Then there is a commuting diagram with exact columns and rows:

\centerline{\xymatrix{
\ar[r]&\check{H}^k(\W,\mathcal{R})\ar[r]^{\pi^*}\ar[d]& \mathbb{H}^k_F(\W,\mathcal{R})\ar[r]^{\pi_*}\ar[d]&
 \overline{\mathbb{H}}^{k-1}_{F}(\W,\mathcal{R})\ar[d]\ar[r]^{\cup F}&
 \check{H}^{k+1}(\W,\mathcal{R})\ar[d]\ar[r]&\\
\ar[r]&\check{H}^k(\W,\mathcal{S})\ar[r]^{\pi^*}\ar[d]& \mathbb{H}^k_F(\W,\mathcal{S})\ar[r]^{\pi_*}\ar[d]& 
\overline{\mathbb{H}}^{k-1}_{F}(\W,\mathcal{S})\ar[d]\ar[r]^{\cup F}&\check{H}^{k+1}(\W,\mathcal{S})
\ar[d]\ar[r]&\\
\ar[r]&\check{H}^{k+1}(\W,\underline{\Z})\ar[r]^{\pi^*}& \mathbb{H}^{k+1}_F(\W,\underline{\Z})\ar[r]^{\pi_*}& 
\overline{\mathbb{H}}^{k}_{F}(\W,\underline{\Z})\ar[r]^{\cup F} & \check{H}^{k+2}(\W,\underline{\Z})\ar[r]&\\
}}
\end{theorem}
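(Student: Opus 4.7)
The plan is to read each row of the diagram as the long exact sequence associated to a short exact sequence of cochain complexes, read each column as the long exact sequence coming from the coefficient short exact sequence $0\to\underline{\Z}\to\mathcal{R}\to\mathcal{S}\to 0$, and deduce commutativity of every square by naturality of both constructions.

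For each coefficient sheaf $\mathcal{F}\in\{\mathcal{R},\mathcal{S},\underline{\Z}\}$, the very definitions of $C^\bullet_F(\W,\mathcal{F})$ and $\overline{C}^\bullet_F(\W,\mathcal{F})$ furnish a short exact sequence of cochain complexes
\begin{equation*}
0 \longrightarrow \check{C}^\bullet(\W,\mathcal{F}) \xrightarrow{\iota} C^\bullet_F(\W,\mathcal{F}) \xrightarrow{p} \overline{C}^{\bullet-1}_F(\W,\mathcal{F}) \longrightarrow 0,
\end{equation*}
where $\iota(\phi^{k0})=(\phi^{k0},1,1)$ and $p$ forgets the first slot. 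Inspection of the formulas (\ref{dfdefn}) and (\ref{overdfdefn}) shows that $\iota$ is a cochain map (the $\cup_1 F$ and $\cup_2 C(F)$ contributions vanish on triples with trivial middle and bottom slots) and that $p$ is one as well (the last two components of $D_F$ coincide with $\overline{D}_F$ under the shift by one). The zigzag lemma produces a long exact sequence whose first two maps match the $\pi^*$ and $\pi_*$ defined in the statement.

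The central computation is to identify the connecting homomorphism with $\cup F$. Given a $\overline{D}_F$-cocycle $(\phi^{(k-1)1},\phi^{(k-2)2})$, I would lift it to $(1,\phi^{(k-1)1},\phi^{(k-2)2})\in C^k_F(\W,\mathcal{F})$ and apply $D_F$: the last two slots of the result vanish because the original pair was $\overline{D}_F$-closed, and the first slot equals, up to a uniform $(-1)^{k+1}$ sign, the product $(\phi^{(k-1)1}\cup_1 F)\cdot(\phi^{(k-2)2}\cup_2 C(F))$. This is precisely the cochain defining $\cup F$ in the statement up to a sign that does not affect exactness, so each row is exact.

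For the columns, I would apply the short exact sequence of sheaves $0\to\underline{\Z}\to\mathcal{R}\to\mathcal{S}\to 0$ term-by-term to each of $\check{C}^\bullet$, $C^\bullet_F$, and $\overline{C}^{\bullet-1}_F$. For the middle column this is exactly the argument behind Corollary \ref{dimreducedlongexact}, and the same contracting-homotopy on the good cover $\W$ kills the $\mathcal{R}$-valued cohomology in the relevant positive degrees, so all three long exact sequences degenerate into three-term exact columns of the indicated shape. Commutativity of every square then follows from two layers of naturality: the coefficient-change maps commute with $\iota$ and $p$ at the cochain level, hence with the induced $\pi^*$ and $\pi_*$, and naturality of the connecting homomorphism in morphisms of short exact sequences of complexes handles the squares involving $\cup F$. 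The main obstacle is the sign bookkeeping required to recognise the connecting homomorphism as $\cup F$, together with verifying that the $\mathcal{R}$-valued cohomology in the outer two columns does indeed vanish on a good cover; both tasks are routine but must be carried out with care.
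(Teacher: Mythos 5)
Your proposal reconstructs the argument the paper describes as ``routine following exactly from the definitions'': the rows arise from the split short exact sequence of complexes $0\to\check{C}^\bullet(\W,\mathcal{F})\to C^\bullet_F(\W,\mathcal{F})\to\overline{C}^{\bullet-1}_F(\W,\mathcal{F})\to0$, the columns from the coefficient sequence $0\to\underline{\Z}\to\mathcal{R}\to\mathcal{S}\to0$ exactly as in Corollary~\ref{dimreducedlongexact}, and commutativity from the naturality in the coefficient sheaf of the explicit cochain-level formulas. This matches the paper's intended approach, and your caveat about sign bookkeeping in identifying the connecting homomorphism with $\cup F$ is the correct one to flag; a uniform discrepancy by $(-1)$ affects neither exactness nor the commutativity of the squares.
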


The proof of the above theorem is routine following exactly from the definitions. 
This is quite distinct from the proof of Theorem \ref{PRWGysin}, which as we have already 
mentioned, is very difficult. As desired, this theorem agrees with Theorem 
\ref{PRWGysin} in the following sense:

\begin{theorem}\label{PRWagreement}
Let $\pi:X\to Z$ be a principal $\R^n/\Z^n$-bundle over a $C^\infty$ manifold $Z$, 
and let $F\in\check{Z}^2(\W,\underline{\Z}^n)$ be a representative of the Euler vector 
defined over a good open cover $\W$ of $Z$. Then there is a commutative diagram with exact rows:

\centerline{\xymatrix{
\ar[r]&\check{H}^k(\W,{\mathcal S})\ar[r]^{\pi^*_{\R^n}}\ar[d]^{\operatorname{id}}& 
H^k_{\R^n}(\pi^{-1}(\W),{\mathcal S})\ar[r]^{\pi_*}\ar[d]&
\check{H}^{k-1}(\W,\hat{{\mathcal N}})\ar[r]^{\cup F}\ar[d]&
\check{H}^{k+1}(\W,{\mathcal S})\ar[r]\ar[d]^{\operatorname{id}}&\\
\ar[r]&\check{H}^{k}(\W,{\mathcal S})\ar[r]^{\pi^*}& {\mathbb H}^{k}_F(\W,{\mathcal S})
\ar[r]^{\pi_*}& \overline{{\mathbb H}}^{k-1}_{F}(\W,{\mathcal S})\ar[r]^{\cup F}& 
\check{H}^{k+1}(\W,{\mathcal S})\ar[r]&
}}

\end{theorem}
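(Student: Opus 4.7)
The plan is to define the two non-trivial vertical arrows at the cocycle level, verify they intertwine the respective Gysin maps, and then invoke Theorem~\ref{PRWGysin} and Theorem~\ref{MyGysin} for row exactness. The map $\Phi:\check{H}^{k-1}(\W,\hat{\mathcal N})\to \overline{\mathbb H}^{k-1}_F(\W,\mathcal S)$ is the evident inclusion $[\varphi]\mapsto [(\varphi,1)]$ with trivial $\mathcal M$-component; it is well-defined because $\overline{D}_F$ restricted to pairs with trivial $\mathcal M$-entry reduces to $\check\partial$.

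To define $\Psi:H^k_{\R^n}(\pi^{-1}(\W),\mathcal S)\to \mathbb H^k_F(\W,\mathcal S)$, I fix local sections $\sigma_{\lambda_0}:W_{\lambda_0}\to \pi^{-1}(W_{\lambda_0})$ and lifts $s_{\lambda_0\lambda_1}:W_{\lambda_0\lambda_1}\to\R^n$ of the transition functions $t_{\lambda_0\lambda_1}$, so that $\check\partial s = F$. For a PRW cocycle $(\phi^{k0},\eta^{(k-1)1})$ on $\pi^{-1}(\W)$, set $\psi^{k0}_{\lambda_0\dots\lambda_k}(z):=\phi^{k0}_{\lambda_0\dots\lambda_k}(\sigma_{\lambda_0}(z))$, define $\psi^{(k-1)1}_{\lambda_0\dots\lambda_{k-1}}(z)\in\hat{\Z}^n$ by $m\mapsto \eta^{(k-1)1}_{\lambda_0\dots\lambda_{k-1}}(m,\sigma_{\lambda_0}(z))$ for $m\in\Z^n$, and set $\psi^{(k-2)2}:=1$. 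Since $\Z^n$ acts trivially on $X$, the $\R^n$-cocycle identity for $\eta^{(k-1)1}$ restricted to $\Z^n$ becomes the homomorphism identity, so $\psi^{(k-1)1}(z)$ really does lie in $\hat{\Z}^n$ as claimed.

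The technical core is showing that $\Psi$ is a chain map, i.e.\ $D_F\Psi=\Psi d_{PRW}$. To compare $\phi^{k0}_{\lambda_0\dots\lambda_k}$ at $\sigma_{\lambda_0}(z)$ versus $\sigma_{\lambda_1}(z)$ one must translate by $s_{\lambda_0\lambda_1}(z)$, and the $\R^n$-cocycle identity for $\eta^{(k-1)1}$ converts this translation into a multiplicative factor of $\eta^{(k-1)1}(s_{\lambda_0\lambda_1}(z),\sigma_{\lambda_0}(z))$. The alternating \v{C}ech sum over these factors assembles into precisely the $\psi^{(k-1)1}\cup_1 F$ term in the first slot of $D_F$. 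A second-order iteration (now using that $s_{\lambda_0\lambda_1}$ fails to be an $\R^n$-cocycle by exactly $F$, together with Steenrod's identity $\check\partial C(F)_{ij}=F_i\cup F_j-F_j\cup F_i$) produces the $\cup_2 C(F)$ correction, matching the formula \eqref{dfdefn} for $D_F$. The $\mathcal M$-slot stays trivial because $\Z^n$ acts trivially on $X$, so the second-order obstruction collapses into a pure \v{C}ech coboundary absorbed by $D_F$.

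With the chain-map property established, commutativity of each square is immediate. Leftmost: $\pi^*_{\R^n}$ sends $\phi^{k0}$ to its pullback under $\pi$ with trivial equivariant piece, and $\Psi$ then restricts along $\sigma_{\lambda_0}$ to recover $\phi^{k0}$, matching $\pi^*$ on the bottom. Middle: both $\pi_*$ maps drop the $\check{C}^k$-component, and $\Psi$ delivers to $\Phi$ exactly the $\hat{\mathcal N}$-cochain obtained by restricting $\eta^{(k-1)1}$ to $\Z^n$. Rightmost: both $\cup F$ maps use the same representative $F$. Row exactness is then supplied by Theorem \ref{PRWGysin} and Theorem \ref{MyGysin}. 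The main obstacle is the chain-map computation above, namely careful bookkeeping of how the Euler cocycle $F$ and Steenrod's $C(F)$ emerge from the failure of the local sections to commute with the $\R^n$-action when an equivariant cocycle on $X$ is pushed down to a \v{C}ech cocycle on $Z$.
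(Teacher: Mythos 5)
Your overall strategy matches the paper's: define the two nontrivial vertical arrows at the cochain level, check that the three squares commute, and invoke Theorem~\ref{PRWGysin} and Theorem~\ref{MyGysin} for exactness of the two rows. The inclusion $\Phi:[\varphi]\mapsto[(\varphi,1)]$ is fine and agrees with the paper's $\phi(\nu,\eta)^{(k-2)2}:=1$.

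The genuine gap is in the formula for $\Psi$. You set $\psi^{k0}_{\lambda_0\dots\lambda_k}(z):=\phi^{k0}_{\lambda_0\dots\lambda_k}(\sigma_{\lambda_0}(z))$ with no correction, but this is not a chain map. The paper's map includes an essential extra factor:
$$
\phi(\nu,\eta)^{k0}_{\mu_0\dots\mu_k}(z)=\nu_{\mu_0\dots\mu_k}(\sigma_{\mu_k}(z))\cdot
\eta_{\mu_0\dots\mu_{k-1}}\bigl(-s_{\mu_{k-1}\mu_k}(z),\sigma_{\mu_k}(z)\bigr),
$$
precisely so that when $\check\partial$ is applied, the translations between the various base sections $\sigma_{\mu_i}(z)$ cancel against the $\eta$-cocycle identity and leave behind only the $\Z^n$-valued obstruction $\check\partial s=F$, which is what $\cup_1 F$ acting on $\psi^{(k-1)1}$ produces. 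Your prose in the third paragraph correctly describes this mechanism (``translate by $s_{\lambda_0\lambda_1}(z)$, and the $\R^n$-cocycle identity for $\eta$ converts this translation into a multiplicative factor of $\eta(s_{\lambda_0\lambda_1}(z),\sigma_{\lambda_0}(z))$''), but those factors live in your target only if you insert them \emph{into the definition of} $\psi^{k0}$; as defined, $\psi^{k0}$ simply does not contain them. Concretely, the $i=0$ term of $\check\partial\psi^{k0}$ is evaluated at $\sigma_{\lambda_1}(z)$ rather than $\sigma_{\lambda_0}(z)$, and the resulting discrepancy is $\eta$ evaluated at a \emph{real}, not integer, argument $s_{\lambda_0\lambda_1}(z)$; this cannot be matched by $(\psi^{(k-1)1}\cup_1 F)$, which only sees $\eta$ restricted to $\Z^n$. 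The correction factor in the paper's formula bridges exactly this real-versus-integer gap, and omitting it is not a notational variant but a substantive error: without it $D_F\Psi\neq\Psi d_{PRW}$ and the middle square fails to commute. (The choice of $\sigma_{\lambda_0}$ versus $\sigma_{\mu_k}$ as base section is immaterial up to signs, but the correction term must be present in either convention.)
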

To prove this theorem we need only define the downward arrows 
$H^k_{\R^n}(\pi^{-1}(\W),{\mathcal S})\to {\mathbb H}^{k}_F(\W,{\mathcal S})$ 
and $\check{H}^{k-1}(\W,\hat{{\mathcal N}})\to \overline{{\mathbb H}}^k_F(\W,{\mathcal S})$, 
and the commutativity will follow immediately from the definitions. Indeed, if $(\nu,\eta)$ 
is a cochain in $C^k_{\R^n}(\pi^{-1}(\W),{\mathcal S})$, its image in $C^{k}_F(\W,{\mathcal S})$ 
is defined to be the triple 
\begin{equation*}
(\phi^{k0}(\nu,\eta),\phi^{(k-1)1}(\nu,\eta),\phi^{(k-2)2}(\nu,\eta))
\end{equation*} 
given by
\begin{align*}
\phi(\nu,\eta)^{k0}_{\mu_0\dots\mu_k}(z):=&\nu_{\mu_0\dots\mu_k}(\sigma_{\mu_k}(z))
\eta_{\mu_0\dots\mu_{k-1}}(-s_{\mu_{k-1}\mu_k}(z),\sigma_{\mu_k}(z)),\\
\phi(\nu,\eta)^{(k-1)1}_{\mu_0\dots\mu_{k-1}}(m,z):=&\eta_{\mu_0\dots\mu_{k-1}}(m,\sigma_{\mu_{k-1}}(z)),
\quad\mbox{and}\quad
\phi(\nu,\eta)^{(k-2)2}:=1.
\end{align*}
These formulas should be compared with the formulas in \cite[Lemma 4.2]{PacRaeWill96}, 
on which they are based.

\section[Dimensional Reduction Isomorphisms]{Dimensional Reduction Isomorphisms}

Recall the projection $\pi^{0,3}$ of $\check{H}^3(X,\underline{\Z})$ onto the $E^{0,3}_\infty$ term 
of the Leray-Serre spectral sequence of $\pi:X\to Z$. Since
\[E^{0,3}_\infty=\{f\in C(Z,\check{H}^3(\R^n/\Z^n,\underline{\Z})):d_2f=d_3f=d_4f=0\},\]
we can view $\pi^{0,3}$ as a map $\pi^{0,3}:\check{H}^3(X,\underline{\Z})\to C(Z,\check{H}^3(\R^n/\Z^n,
\underline{\Z}))$, that can be calculated as follows. For each $x\in X$, let $\iota_x:\R^n/\Z^n\to X$ 
be the fibre inclusion $\iota_x:[t]\mapsto [-t]\cdot x$. Then, for $\delta\in\check{H}^3(X,\underline{\Z})$ 
we have
$(\pi^{0,3}\delta)(\pi(x))=\iota^*_x\delta.$
We will denote the kernel of $\pi^{0,3}$ by $\check{H}^3(X,\underline{\Z})|_{\pi^{0,3}=0}$. 
With this notation, we can state our main theorem, which we refer to as 
the \emph{dimensional reduction} isomorphisms.

\begin{theorem}\label{MainSquare}
Let $\pi:X\to Z$ be a $C^\infty$ principal $\T^n$-bundle over a Riemannian manifold $Z$. 
Then there exists a good open cover $\U$ of $X$, and a cochain $s\in \check{C}^1(\pi(\U),\mathcal{R}^n)$ 
such that
\begin{itemize}
\item[(1)] the image of $[\check\partial s]\in \check{H}^2(\pi(\U),\underline{\Z}^n)$ in 
$\check{H}^2(Z,\underline{\Z}^n)$ is the Euler vector of  $\pi:X\to Z$; and
\item[(2)] every stable continuous trace C*-algebra over $X$ is trivialised over $\U$.\\
Moreover, there is an isomorphism $\Xi_{\U,s}:\Br_{\R^n}(X)\to 
\mathbb{H}^2_{\check\partial s}(\pi(\U),\mathcal{S})$ and a commutative diagram \medskip

\centerline{\xymatrix{
\operatorname{Br}_{\R^n}(X)\ar[r]\ar[d]^{\cong}_{\Xi_{\U,s}}&\check{H}^3(X,\underline{\Z})|_{\pi^{0,3}=0}
\ar[d]^{\cong}\\
\mathbb{H}^2_{\check\partial s}(\pi(\U),\mathcal{S})\ar[r]&
\mathbb{H}^3_{\check\partial s}(\pi(\U),\underline{\Z})\,. 
}}
\end{itemize}
\end{theorem}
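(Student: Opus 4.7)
The plan is to build $\Xi_{\U,s}$ by extracting the Dixmier--Douady, Phillips--Raeburn and Mackey data from a representative $(A,\alpha)$ of $\Br_{\R^n}(X)$ in a way that mirrors the three-column structure of $C^2_{\check\partial s}(\pi(\U),\mathcal{S})$, and then to identify the commutative square by reading those three columns as the bottom of the Leray--Serre spectral sequence of $\pi:X\to Z$.

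First I would construct $(\U, s)$. Since $Z$ is Riemannian it admits a good cover $\W=\{W_{\mu_0}\}$ by geodesically convex sets; after refining, the bundle $\pi:X\to Z$ trivialises over each $W_{\mu_0}$ with transition functions $t_{\mu_0\mu_1}\in C(W_{\mu_0\mu_1},\T^n)$ which, by fineness of $\mathcal{R}^n$, lift to $s\in\check{C}^1(\W,\mathcal{R}^n)$ with $\check\partial s\in\check{Z}^2(\W,\underline{\Z}^n)$ a representative of the Euler vector. For $\U$ I would take the cover of $X$ by ``tubular'' good open sets $U_{\mu_0}:=\sigma_{\mu_0}(W_{\mu_0})\cdot V$ for a small geodesic ball $V\subset\T^n$, with $\sigma_{\mu_0}$ the local section over $W_{\mu_0}$; a further refinement ensures every continuous trace C*-algebra on $X$ trivialises over $\U$, and by construction $\pi(\U)=\W$.

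Next I would define $\Xi_{\U,s}(A,\alpha)$. Choose trivialisations $\Phi_{\mu_0}:A|_{U_{\mu_0}}\to C_0(U_{\mu_0},\K)$ with Dixmier--Douady transitions $\nu\in\check{C}^2(\U,\mathcal{S})$, local unitaries $u^i_{\mu_0}\in C(U_{\mu_0},U(\H))$ implementing the locally inner action $\alpha|_{\Z^n}$, a Phillips--Raeburn cochain $\eta$ from (\ref{Philrae}), and a Mackey cochain $f$ from (\ref{MackeyDefn}). Descending along the local sections $\sigma_{\mu_0}$ as in the formulas of Theorem \ref{PRWagreement} but now also tracking the Mackey component produces a triple
\begin{equation*}
\Xi_{\U,s}(A,\alpha):=\bigl[\phi^{20}(\nu,\eta,f),\ \phi^{11}(\eta,f),\ \phi^{02}(f)\bigr]\in\mathbb{H}^2_{\check\partial s}(\W,\mathcal{S}),
\end{equation*}
where the third slot carries precisely the Mackey information that was invisible to Theorem \ref{PRWagreement}. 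The cocycle equation $D_{\check\partial s}\Xi(A,\alpha)=0$ reduces to the \v{C}ech cocycle relations for $\nu,\eta,f$, the commutator $u^iu^j=f_{ij}^{-1}u^ju^i$, and Steenrod's identity $\check\partial C(\check\partial s)_{ij}=(\check\partial s)_i\cup(\check\partial s)_j-(\check\partial s)_j\cup(\check\partial s)_i$, which is exactly what produces the $\cup_2 C(F)$ correction in $D_F$. Well-definedness modulo outer conjugacy, the homomorphism property, and the existence of an explicit inverse then follow from formal patterns: alternative choices alter the triple by a $D_{\check\partial s}$-coboundary; $\otimes_{C_0(X)}$ multiplies data componentwise; and the inverse is the universal induced-algebra construction of Section \ref{UniversalSection}, gluing local $\operatorname{Ind}_{\Z^n}^{\R^n}(C_0(\pi^{-1}(W_{\mu_0}),\K),\operatorname{Ad}u)$ pieces with clutching, unitary, and Heisenberg-commutation twists prescribed by $\phi^{20},\phi^{11},\phi^{02}$ respectively, the cocycle equation being precisely the obstruction to consistent gluing.

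The hard part will be establishing the commutative square, and in particular that the right vertical map $\check{H}^3(X,\underline{\Z})|_{\pi^{0,3}=0}\to\mathbb{H}^3_{\check\partial s}(\W,\underline{\Z})$ is an isomorphism. My plan is to interpret the three-column complex $C^k_{\check\partial s}(\W,\underline{\Z})$ as a chain-level model for the truncation $\bigoplus_{q\le 2}E_2^{k-q,q}$ of the Leray--Serre spectral sequence of $\pi:X\to Z$, under which the vertical \v{C}ech differential realises $d_1$, the map $\cup_1 \check\partial s$ realises the transgression $d_2$, and $\cup_2 C(\check\partial s)$ encodes the secondary $d_3$ operation; Steenrod's identity is exactly what makes $d_2$ square to zero at the cochain level and produces a well-defined $d_3$. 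Imposing $\pi^{0,3}=0$ removes the fourth-column $E_\infty^{0,3}$ contribution, so the Leray filtration on $\check{H}^3(X,\underline{\Z})|_{\pi^{0,3}=0}$ is computed precisely by $\mathbb{H}^3_{\check\partial s}(\W,\underline{\Z})$, and a bidegree count rules out higher differentials $d_k$ with $k\ge 4$ in total degree three. Commutativity of the square is then forced by naturality, since both horizontal maps act slot-by-slot as the Bockstein $\check{H}^2(-,\mathcal{S})\to\check{H}^3(-,\underline{\Z})$. The genuine technical work lies in producing the explicit chain-level isomorphism realising this spectral-sequence identification, which I would handle by extending the cochain maps $\phi(\nu,\eta)$ of Theorem \ref{PRWagreement} by a third component carrying the $E_2^{k-2,2}$ datum.
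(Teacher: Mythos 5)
Your construction of $\Xi_{\U,s}$ is essentially the paper's: you extract the Dixmier--Douady cocycle $\nu$, the Phillips--Raeburn cochain $\eta$ built from local unitaries $u^i_{\mu_0}$, and the Mackey cochain $f$, and you descend along the sections $\sigma_{\mu_0}$ to land in the three-column complex. The paper packages this slightly differently (via the auxiliary map $\beta^{\alpha,\Phi}$ and unitary lifts $v^i_{\lambda_0}$, $v_{\lambda_0\lambda_1}$, giving Eqns.~(\ref{phi02defn})--(\ref{phi20defn})), but the content is the same, and your observation that the $\cup_2 C(F)$ correction in the cocycle condition is forced by Steenrod's identity is exactly what the paper uses. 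That part is fine.

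There are, however, two genuine gaps in the rest of your plan. First, you propose to exhibit the inverse of $\Xi_{\U,s}$ directly by gluing induced algebras with the twists prescribed by $(\phi^{20},\phi^{11},\phi^{02})$ --- i.e.\ by the construction of Section~\ref{UniversalSection}. This is circular. The universal induced algebra $A_\phi$ can only be built once one knows that an arbitrary cocycle $\phi$ admits unitary-valued lifts satisfying Eqns.~(\ref{eq1})--(\ref{eq3}); the paper points out explicitly (Remark at the end of Section~\ref{UniversalSection}) that the only known argument for this passes through the \emph{prior} proof of surjectivity of $\Xi_{\U,s}$, which itself is obtained by a quite different route --- factoring through Tu's groupoid cohomology $\check{H}^2(\R^n\ltimes X,\mathcal{S})\cong\Br_{\R^n}(X)$ and exhibiting a section $\Psi_{\U,s}$ with $\Xi_{\U,s}\circ\Psi_{\U,s}=\operatorname{id}$. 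You cannot use Section~\ref{UniversalSection} as the proof of surjectivity and simultaneously use surjectivity to justify Section~\ref{UniversalSection}.

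Second, for the right vertical isomorphism you assert that $C^\bullet_{\check\partial s}(\W,\underline{\Z})$ is a chain-level model for the truncation $\bigoplus_{q\le 2}E^{k-q,q}_\bullet$ of the Leray--Serre spectral sequence, with $\cup_1(\check\partial s)$ realising $d_2$ and $\cup_2 C(\check\partial s)$ realising a chain-level $d_3$. This would indeed yield the isomorphism directly, but it is precisely the explicit identification the paper says it does \emph{not} have: the authors remark that the arrow $\check{H}^3(X,\underline{\Z})|_{\pi^{0,3}=0}\to{\mathbb H}^3_{\check\partial s}(\pi(\U),\underline{\Z})$ is ``only defined implicitly by chasing around a diagram'' and that an explicit description would be desirable but is open. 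What the paper actually does is different: it defines this arrow as the composite $F^{-1}$ followed by $\Xi_{\U,s}$ followed by the Bockstein, shows well-definedness by computing that the kernel $C(Z,M_n^u(\R))\cong H^2_M(\R^n,C(X,\T))$ of $F$ (Proposition~\ref{BrauerKernel}) maps to zero in ${\mathbb H}^3_{\check\partial s}$ (Lemma~\ref{Liftindependence}), and then deduces the isomorphism from the Five Lemma applied to the ladder linking the Mathai--Rosenberg exact sequence to the exact sequence of Corollary~\ref{dimreducedlongexact}. If you wish to pursue the spectral-sequence route you would need to actually prove the filtration identification (including the delicate issue of why no $d_k$, $k\ge4$, contributes in total degree $3$ when $n$ is large); as stated, this is an unproved assertion rather than an argument.

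One smaller point: the tubular sets $U_{\mu_0}:=\sigma_{\mu_0}(W_{\mu_0})\cdot V$ with $V\subsetneq\T^n$ a single small ball do not cover $X$, since their union omits fibre directions outside $V$. The paper avoids this by working with geodesically convex balls in $X$ for a metric adapted to a connection so that $\pi$ is a Riemannian submersion, and then proving $\pi(B_\epsilon(x))=B_\epsilon(\pi(x))$; this is where the Riemannian hypothesis on $Z$ is genuinely used and is not a cosmetic choice.
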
 \smallskip

The proof of this theorem is difficult and highly technical, with the most non-trivial part being 
the construction of $\Xi_{\U,s}$ and the proof that it is an isomorphism. In fact, 
injectivity of $\Xi_{\U,s}$ is reasonably straightforward; the main part is to show there is a 
commutative diagram\medskip

\centerline{\xymatrix{
\operatorname{Br}_{\R^n}(X)|_{M=0}\ar[r]^{\cong}\ar[dr]&H^2_{\R^n}(\pi^{-1}(\pi(\U)),\mathcal{S})\ar[d]\\
&\mathbb{H}^2_{\check\partial s}(\pi(\U),\mathcal{S})\,, }} \smallskip

\noindent where the downward arrow comes from Theorem 
\ref{PRWagreement}. For surjectivity on the other hand, one utilises the groupoid cohomology of Tu, 
which satisfies $\check{H}^2(G\ltimes X,\mathcal{S})\cong \Br_G(X)$ \cite[Cor 5.9]{Tu}, and
\cite{KumMuhRenWil98}, where $G\ltimes X$ is the transformation-group groupoid. 
Using this, we construct a map 
$\Psi_{\U,s}:\mathbb{H}^2_{\check\partial s}(\pi(\U),\mathcal{S}) \to \operatorname{Br}_{\R^n}(X)$ 
that factors through $\check{H}^2(G\ltimes X,\mathcal{S})$ and satisfies $\Xi_{\U,s}\circ\Psi_{\U,s}=
\operatorname{id}$ (see \cite{BouCarRat2} for details).

For the rest of this section we shall describe the isomorphisms $\Xi_{\U,s}$ and 
$\check{H}^3(X,\underline{\Z})|_{\pi^{0,3}=0}\to \mathbb{H}^3_{\check\partial s}(\pi(\U),\underline{\Z})$. 
This will be instructive in the following section, where we exhibit a concrete representative 
of every class in $\Br_{\R^n}(X)$ in terms of a cocycle in $Z^2_{\check\partial s}(\pi(\U),\mathcal{S})$.

\begin{definition}[\cite{Spi79}]
\begin{itemize}
\item[(i)] Let $(M,g)$ be a Riemannian manifold, and $U\subset M$. Then $U$ is said to be 
\emph{geodesically convex} if every two points $x,y\in U$ have a unique geodesic of 
minimum length between them, and that geodesic lies entirely within $U$.
\item[(ii)] Let $\U$ be an open cover of a Riemannian manifold $(M,g)$. Then we call 
$\U$ \emph{geodesically convex} if every open set in $\U$ is geodesically convex.
\end{itemize}
\end{definition}

Note that every geodesically convex open set is contractible, and every geodesically convex open 
cover is good \cite{Spi79}. It is instructive to see how the geometry is used for the following result. 
Thus we provide the proof for the first claim. The second claim then follows automatically, 
since the cover $\U$ is \emph{good}. To this end, let $\pi:X\to Z$ be a $C^\infty$ principal 
$\R^n/\Z^n$-bundle over a Riemannian manifold $Z$. We will show there is a good cover of 
$X$ that pushes down to a good cover of $Z$.

\begin{lemma}\label{geoconvex}
Let $(Z,dz^2)$ be a Riemannian manifold and $\pi:X\to Z$ a $C^\infty$ principal $\R^n/\Z^n$-bundle. 
Then $X$ has a geodesically convex open cover $\U=\{U_{\lambda}\}_{\lambda\in\mathcal{I}}$ 
such that $\pi(\U):=\{\pi(U_{\lambda})\}_{\lambda\in\mathcal{I}}$ is a 
geodesically convex open cover of  $Z$. 
Moreover, there exist $C^\infty$ local sections $\sigma_{\lambda}:\pi(U_\lambda)\to U_\lambda$.
\end{lemma}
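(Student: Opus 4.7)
The plan is to equip $X$ with a Riemannian metric compatible with the principal bundle structure, then produce the desired cover by pulling back a geodesically convex cover of $Z$ and refining along fibers.

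First I would construct a Riemannian metric on $X$ as follows. Choose any principal $\T^n$-connection on $\pi:X\to Z$; this gives a horizontal distribution $\mathcal{H}\subset TX$ complementary to the vertical distribution $\mathcal{V}=\ker d\pi$. Define $g_X$ on horizontal vectors by pulling back $dz^2$ via $d\pi|_{\mathcal{H}}$, and on vertical vectors by the flat bi-invariant metric on $\T^n\cong\R^n/\Z^n$ (identifying each fibre with $\T^n$ via the principal action and any basepoint). The resulting metric is independent of basepoint because $\T^n$ acts isometrically on itself. With $g_X$, the map $\pi$ becomes a Riemannian submersion, each fibre is a flat, totally geodesic subtorus, and $\T^n$ acts on $X$ by isometries.

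Next I would build the base cover. By J.H.C. Whitehead's theorem, every Riemannian manifold admits a geodesically convex open cover; applied to $(Z,dz^2)$ and refined against any open cover by trivialising neighbourhoods for $\pi:X\to Z$, this produces a geodesically convex cover $\{V_\beta\}$ of $Z$ with smooth sections $s_\beta:V_\beta\to X$. By Whitehead's theorem on the flat torus, fix once and for all a finite geodesically convex open cover $\{B_1,\dots,B_N\}$ of $\T^n$ by sets of small diameter $\varepsilon>0$. Now for each pair $(\beta,k)$ define
\[
U_{\beta,k}:=\bigl\{t\cdot s_\beta(z): z\in V_\beta,\ t\in B_k\bigr\}\subset\pi^{-1}(V_\beta).
\]
Since $\pi$ is $\T^n$-equivariant with trivial action on $Z$ and $B_k$ covers $\T^n$, one has $\pi(U_{\beta,k})=V_\beta$, so $\pi(\U)$ refines to $\{V_\beta\}$ and is geodesically convex. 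The required local sections $\sigma_{\beta,k}:V_\beta\to U_{\beta,k}$ are obtained from $s_\beta$ by translating into $B_k$.

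The main obstacle, and the only delicate step, is to verify that each $U_{\beta,k}$ is geodesically convex in $(X,g_X)$. The idea is that, in a local trivialisation in which the chosen connection is sufficiently close to the product connection, $g_X$ is $C^0$-close to a product metric on $V_\beta\times B_k$, in which the set $V_\beta\times B_k$ is manifestly geodesically convex (as a product of two geodesically convex factors). To make this rigorous, one shrinks the diameter of $V_\beta$ and $B_k$ so that both lie well inside the convexity radius of $g_X$ at every point of the corresponding $U_{\beta,k}$; uniform bounds on the convexity radius exist locally because $X$ and $Z$ are smooth manifolds of finite dimension and the curvature of $g_X$ is bounded on relatively compact sets. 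Standard comparison arguments (as in Whitehead's original proof, applied to a starshaped subset of $T_pX$) then show that for sufficiently small radii, the exponential image of the convex set $\exp_p^{-1}(U_{\beta,k})\subset T_pX$ coincides with $U_{\beta,k}$, and hence any two points of $U_{\beta,k}$ are joined by a unique minimising geodesic staying inside $U_{\beta,k}$. Passing to this shrunken cover gives $\U$ with all required properties.
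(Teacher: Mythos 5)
Your construction of the bundle metric $g_X = \pi^* dz^2 + \langle\omega,\omega\rangle$ is exactly the one in the paper's proof, so the setup is correct: $\pi$ becomes a Riemannian submersion with flat, totally geodesic fibres. However, your cover is built very differently, and it is precisely at the step you yourself flag as ``the main obstacle'' that the argument breaks down. You form $U_{\beta,k}$ as a fibre-product-type set $\{t\cdot s_\beta(z): z\in V_\beta,\ t\in B_k\}$ and then assert that after shrinking radii this set is geodesically convex. That assertion is not justified by the comparison/Whitehead arguments you invoke. Whitehead's theorem guarantees that small \emph{geodesic balls} are convex; it does not say that every small ``nicely shaped'' set is convex, and a product-type set defined via a local section is not of that kind. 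The genuine difficulty is the O'Neill $A$-tensor (the curvature of the connection): when it is nonzero, a geodesic in $X$ with mixed horizontal and vertical velocity has a projection to $Z$ that is \emph{not} a geodesic but a curve that bends out of its chord. Consequently, for two points of $U_{\beta,k}$ with large vertical and small horizontal displacement, the minimising geodesic between them projects to an arc of nonzero geodesic curvature in $Z$ (of size proportional to the vertical speed), and there is no clean mechanism forcing that arc to remain in $V_\beta$ nor forcing the lift to remain in the $B_k$-slab. Saying ``the set $\exp_p^{-1}(U_{\beta,k})\subset T_pX$ is convex'' is also unjustified — the pullback of a product set through a twisted section need not be convex in normal coordinates.

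The paper sidesteps all of this by reversing the direction of the construction: one covers $X$ by geodesic balls $B_\epsilon(x)$, which are automatically geodesically convex once $\epsilon$ is below the convexity radius, and then exploits the Riemannian-submersion structure to show $\pi(B_\epsilon(x)) = B_\epsilon(\pi(x))$. The key facts used are that $\pi$ is distance-nonincreasing (giving one inclusion) and that any geodesic in $Z$ from $\pi(x)$ lifts uniquely to a horizontal geodesic from $x$ of the same length (giving the other inclusion, and moreover that geodesics with horizontal initial velocity stay horizontal). The local section $\sigma_\lambda$ is then obtained by horizontal lifting of radial geodesics rather than by translating a pre-chosen section. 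This avoids having to prove any non-ball open set is convex. If you want to salvage your own approach, you would need to replace the product-type sets by actual geodesic balls (or metric balls in the flat fibre and base, tied together as in the paper), at which point you essentially recover the published argument; as written, the convexity claim for $U_{\beta,k}$ is a gap.
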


\begin{proof}
Fix a connection 1-form $\omega$ on $\pi:X\to Z$, and let $\langle\ ,\ \rangle$ denote a bi-invariant 
metric on the Lie algebra $\mathfrak t$ of $\R^n/\Z^n$. Therefore we have a $\R^n$-bi-invariant 
Riemannian metric on $X$ defined by $g:=\pi^*dz^2+\langle\omega,\omega\rangle$. Moreover, 
with respect to the metrics $g$ and $dz^2$, the projection $\pi: X\to Z$ is a Riemannian submersion.

Now let $\U=\{U_{\lambda}\}$ be an open cover of $X$ consisting of 
geodesically convex sets defined as follows. For each $x\in X$ choose $\epsilon_0>0$ so that
$B_x(\epsilon_0)=\exp\{v\in TX_x:||v||<\epsilon_0\}$
is geodesically convex (this is possible by \cite[Vol. 1, Ex. 32, p.491]{Spi79}). 
We claim, perhaps after choosing a smaller $\epsilon_0$, that $\pi(\U)=\{\pi(U_\lambda)\}$ 
is a geodesically convex open cover of $Z$.

To see that this is the case, observe that \cite[Vol. 2, Ch. 8, Prop. 7]{Spi79} and \cite[Cor 1.1]{FalStePas04} 
show that a curve $\gamma$ in $Z$ is a geodesic in $Z$ if and only if its unique lift to a horizontal 
curve in $X$ is a geodesic. Moreover, \cite[Prop 1.10]{FalStePas04} says that, if $\gamma:I\to X$ 
is a geodesic such that $\dot{\gamma}(t_0)$ is horizontal at $x=\gamma(t_0)$, then 
$\gamma$ is horizontal. 
Finally, since $\pi$ is a \emph{Riemannian} submersion, the map $d\pi:TX_x\to TZ_{\pi(x)}$ is a 
surjection that preserves the length of horizontal vectors. Therefore, if $\epsilon_1>0$ is such that 
$B_{\epsilon_1}(\pi(x))$ is geodesically convex in $Z$, let $\epsilon=\min\{\epsilon_0,\epsilon_1\}$, 
and then $\pi(B_{\epsilon}(x))=B_{\epsilon}(\pi(x))$.

For the last claim, if $z_0\in B_{\epsilon}(\pi(x))$, then there exists a unique geodesic $\xi$ 
contained in $B_{\epsilon}(\pi(x))$ for which there exists $t\in [0,\epsilon)$ 
with $\xi(0)=\pi(x), \xi(t)=z_0$. From the above, there is a unique (horizontal) geodesic 
$\gamma$ in $B_{\epsilon}(x)$ such that $\pi(\gamma)=\xi$. Then we define 
$\sigma_{\lambda}(z_0):=\gamma(t)$.  This is a $C^\infty$ section by \cite[Vol 1, Thm 14(2)]{Spi79}.
\end{proof}
\begin{definition}
Let $\pi:X\to Z$ be a $C^\infty$ principal $\R^n/\Z^n$-bundle over a Riemannian 
manifold $Z$, and let $\U=\{U_{\lambda_0}\}_{\lambda_0\in\mathcal{I}}$ be a fixed 
good open cover of $X$ such that $\pi(\U)$ is a good open cover of $Z$. Also fix 
$C^\infty$ local sections $\sigma_{\lambda_0}:\pi(U_{\lambda_0})\to U_{\lambda_0}$ 
and a cochain $s\in \check{C}^1(\pi(\U),\mathcal{\R}^n)$ such that, for all indices 
$\lambda_0,\lambda_1\in\mathcal{I}$, and all $z\in \pi(U_{\lambda_0})\cap \pi(U_{\lambda_1})$, we have
$s_{\lambda_0\lambda_1}(z)\cdot\sigma_{\lambda_1}(z)=\sigma_{\lambda_0}(z),$
and
$s_{\lambda_1\lambda_1}(z)=0.$
Then we say the $(X,\U,s)$ is \emph{in the standard setup} (with the space $Z$, 
projection $\pi:X\to Z$ and local sections $\{\sigma_{\lambda_0}\}$ implicit).
\end{definition}

Now we proceed to the construction of the isomorphism $\Xi_{\U,s}$. Let $(X,\U,s)$ be 
in the standard setup, and fix $(CT(X,\delta),\alpha)\in \mathfrak{Br}_G(X)$. Thus there 
exist $C_0(U_{\lambda_0}^0)$-isomorphisms (local trivialisations)
$\Phi_{\lambda_0}:CT(X,\delta)|_{U_{\lambda_0}^0}\to C_0(U_{\lambda_0}^0,\K).$ We 
define a continuous map $\beta^{\alpha,\Phi}:(G\ltimes X[\U^0])^{(1)}\to\operatorname{Aut}\K$ 
as follows. For fixed $T\in\K$, let $a\in CT(X,\delta)$ satisfy $\Phi_{\lambda_0}(a)(g^{-1}x)=T$. 
Then we define
\[\beta^{\alpha,\Phi}_{(\lambda_0(g,x)\lambda_1)}(T):=\Phi_{\lambda_1}(\alpha_g(a))(x).\]
This is indeed well-defined:
\begin{lemma}\label{betawelledfined}
$\beta^{\alpha,\Phi}_{(\lambda_0(g,x)\lambda_1)}(T)$ is independent of the choice of $a\in CT(X,\delta)$ 
satisfying $\Phi_{\lambda_0}(a)(g^{-1}x)=T$.
\end{lemma}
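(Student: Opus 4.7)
The plan is to reduce the statement to a claim about vanishing ideals that is controlled by the hypothesis that $\alpha$ preserves the action on the spectrum. Suppose $a,a' \in CT(X,\delta)$ both satisfy $\Phi_{\lambda_0}(a)(g^{-1}x) = T = \Phi_{\lambda_0}(a')(g^{-1}x)$, and let $b := a-a'$. Then $\Phi_{\lambda_0}(b)(g^{-1}x) = 0$, and I want to show $\Phi_{\lambda_1}(\alpha_g(b))(x) = 0$.

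First I would translate $\Phi_{\lambda_0}(b)(g^{-1}x)=0$ into a statement about the global algebra. Since $\Phi_{\lambda_0}$ is a $C_0(U_{\lambda_0}^0)$-isomorphism, and since the ideal
\[
I_y := \{c \in CT(X,\delta) : c \text{ vanishes at } y\} = \overline{\operatorname{span}}\{f\cdot c : c \in CT(X,\delta),\ f\in C_0(X),\ f(y)=0\}
\]
corresponds under $\Phi_{\lambda_0}$ to $\{h \in C_0(U_{\lambda_0}^0,\K) : h(g^{-1}x)=0\}$, we get $b \in I_{g^{-1}x}$.

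Next I would use the hypothesis that $\alpha$ preserves the spectrum action to transport $I_{g^{-1}x}$ forward under $\alpha_g$. For any $f \in C_0(X)$ with $f(g^{-1}x)=0$ and any $c \in CT(X,\delta)$, the compatibility
\[
\alpha_g(f\cdot c) = \tau_g(f)\cdot \alpha_g(c)
\]
together with $\tau_g(f)(x) = f(g^{-1}x) = 0$ shows that $\alpha_g(f\cdot c) \in I_x$. Passing to the closed span (and using that $\alpha_g$ is continuous), we conclude $\alpha_g(b) \in I_x$. Finally, applying the $C_0(U_{\lambda_1}^0)$-isomorphism $\Phi_{\lambda_1}$ gives $\Phi_{\lambda_1}(\alpha_g(b))(x)=0$, which is what we wanted.

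The only subtle point — and the main thing to get right rather than an obstacle per se — is the identification of the vanishing ideal $I_y$ in terms of the $C_0(X)$-module structure; this is standard for continuous trace C*-algebras but needs to be invoked carefully so that the spectrum-preservation identity $\alpha_g(f\cdot c)=\tau_g(f)\cdot\alpha_g(c)$ can be leveraged on generators and then extended to all of $I_{g^{-1}x}$ by density and continuity of $\alpha_g$.
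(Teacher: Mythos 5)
Your proof is correct and follows the same core strategy as the paper's: both reduce to showing that $\alpha_g$ carries the vanishing ideal at $g^{-1}x$ into the vanishing ideal at $x$ via the $C_0(X)$-module identity $\alpha_g(f\cdot c)=\tau_g(f)\cdot\alpha_g(c)$. The only difference is that the paper invokes \cite[Lemma 2.1]{EchWil98} to produce a single factorization $a-a'=f\cdot c$ with $f(g^{-1}x)=0$, while you describe the vanishing ideal as a closed span of such products and pass to the limit using continuity of $\alpha_g$ --- both are valid.
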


\begin{proof}
Let $b$ be another element such that $\Phi_{\lambda_0}(b)(g^{-1}x)=T$. Then
$\Phi_{\lambda_0}(a-b)(g^{-1}x)=0.$
By \cite[Lemma 2.1]{EchWil98} and its proof, we can identify 
\begin{equation*}CT(X,\delta)|_{X\backslash\{g^{-1}x\}}
= C_0(X\backslash\{g^{-1}x\})\cdot CT(X,\delta)\,,
\end{equation*}
and there exists $f\in C_0(X)$ and $c\in CT(X,\delta)$ such that $f(g^{-1}x)=0$ and $f\cdot c=a-b$.
Then the calculation
\begin{align*}
\Phi_{\lambda_1}(\alpha_g(a-b))(x)& =\Phi_{\lambda_1}(\alpha_g(f\cdot c))(x)
=\Phi_{\lambda_1}(\tau_g(f)\alpha_g(c))(x) \\
& =f(g^{-1}x)\Phi_{\lambda_1}(\alpha_g(c))(x) =0 \,,
\end{align*}
shows that $\Phi_{\lambda_1}(\alpha_g(a))(x)=\Phi_{\lambda_1}(\alpha_g(b))(x)$.
\end{proof}

Now recall
\begin{lemma}[{\cite[Prop 4.27]{RaeWill98}}]\label{zetavanishremark}
Let $U$ be a contractible paracompact locally compact space, and $\beta:U\to\operatorname{Aut}\K$ 
a continuous map. Then there exists a continuous map $u:U\to U(\H)$ such that 
$\beta=\operatorname{Ad}u$.
\end{lemma}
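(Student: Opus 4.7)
The plan is to construct local lifts of $\beta$ via the local sections of the principal $\T$-bundle $U(\H) \to \operatorname{Aut}\K$, assemble the failure of these lifts to agree on overlaps into a \v{C}ech 1-cocycle with values in $\mathcal{S}$, and then use contractibility to trivialise this cocycle and glue the local lifts into a global one.

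First I would invoke the exact sequence (\ref{Uexactsequence}) from the introduction: the map $U(\H) \to \operatorname{Aut}\K$ admits local continuous sections $s_i: V_i \to U(\H)$ over some open cover $\{V_i\}$ of $\operatorname{Aut}\K$. The preimages $U_i := \beta^{-1}(V_i)$ form an open cover of $U$, and the compositions $u_i := s_i \circ \beta|_{U_i}: U_i \to U(\H)$ are continuous local lifts with $\operatorname{Ad} u_i = \beta|_{U_i}$. On each overlap $U_{ij}$, the product $u_i(x) u_j(x)^{-1}$ lies in the kernel of $U(\H) \to \operatorname{Aut}\K$, which is $\T \cdot 1$, so defines $\zeta_{ij} \in C(U_{ij}, \T)$. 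A straightforward computation shows that $\{\zeta_{ij}\}$ is a \v{C}ech 1-cocycle, i.e.\ an element of $\check{Z}^1(U, \mathcal{S})$, representing the obstruction to patching the $u_i$ into a global map.

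The central step is to show $[\zeta] = 0$ in $\check{H}^1(U, \mathcal{S})$. The short exact sequence $0 \to \Z \to \R \to \T \to 1$ combined with the fineness of $\mathcal{R}$ on the paracompact space $U$ yields $\check{H}^1(U, \mathcal{S}) \cong \check{H}^2(U, \underline{\Z})$. Since $U$ is contractible and paracompact, \v{C}ech cohomology with locally constant coefficients vanishes in positive degree, so $\check{H}^2(U, \underline{\Z}) = 0$. Thus we may write $\zeta_{ij} = \zeta_i \zeta_j^{-1}$ for continuous $\zeta_i: U_i \to \T$, and the rescaled lifts $u'_i := \zeta_i^{-1} u_i$ agree on overlaps, gluing to a global continuous map $u: U \to U(\H)$. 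Since rescaling by a scalar does not change the inner action, $\operatorname{Ad} u|_{U_i} = \operatorname{Ad} u_i = \beta|_{U_i}$, giving $\operatorname{Ad} u = \beta$ on all of $U$.

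The main obstacle is rigorously justifying the vanishing of $\check{H}^2(U, \underline{\Z})$ for contractible paracompact $U$: one needs paracompactness to equate \v{C}ech and derived sheaf cohomology, and some form of homotopy invariance (or direct acyclicity of constant sheaves on contractible paracompact spaces) to conclude. A more geometric alternative avoids the cocycle manipulation entirely: view $U(\H) \to \operatorname{Aut}\K$ as a principal $\T$-bundle and form the pullback bundle $\beta^* U(\H) \to U$, which is classified by an element of $\check{H}^1(U, \mathcal{S})$ and hence is trivial over contractible $U$; any global section of this pullback, composed with the map back to $U(\H)$, produces the desired $u$.
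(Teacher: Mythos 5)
The paper offers no proof of this lemma, attributing it entirely to \cite[Prop 4.27]{RaeWill98}, so there is no internal argument to compare against. Your proof is correct and is the standard one for that reference: pull back local sections of the central extension $1\to\T\to U(\H)\to\operatorname{Aut}\K\to 1$ along $\beta$ to get local unitary lifts $u_i$, package the discrepancies $u_iu_j^{-1}$ as a cocycle $\zeta\in\check Z^1(U,\mathcal S)$, and kill it via $\check H^1(U,\mathcal S)\cong\check H^2(U,\underline\Z)=0$ on a contractible paracompact space. You correctly flag the only delicate point --- homotopy invariance of \v Cech cohomology is not automatic for arbitrary spaces --- and indeed it does hold in the paracompact setting (with $U\times[0,1]$ again paracompact since $U$ is locally compact), which is precisely why the lemma carries the paracompactness hypothesis. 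Your bundle-theoretic reformulation, trivialising the pullback $\T$-bundle $\beta^*U(\H)\to U$ and pushing a global section forward to $U(\H)$, is the same argument in different clothing and is arguably cleaner: it collapses the cocycle bookkeeping into the single statement that principal $\T$-bundles over contractible paracompact spaces are trivial.
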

We can thus introduce unitary valued lifts of $\beta^{\alpha,\Phi}$ by letting $\{e_i\}$ be generators 
of $\Z^n$ and recalling from \cite[Prop 2.1]{EchNes01} that, for the  trivialisations 
$\{\Phi_{\lambda_0}\}$, the maps $\Phi_{\lambda_0}\circ\alpha|_{\Z^n}\circ\Phi_{\lambda_0}^{-1}$ 
are locally inner. Thus, since $\U$ is good, {Lemma} \ref{zetavanishremark} implies there exist 
continuous maps $v_{\lambda_0}^i: U_{\lambda_0}\to U(\H)$ such that 
$\Phi_{\lambda_0}\circ\alpha_{e_i}\circ\Phi_{\lambda_0}^{-1}=\operatorname{Ad}v_{\lambda_0}^i$. 
Then we may define for any $m\in \Z^n$
\begin{equation}\label{vsplit}
v_{\lambda_0}^m(x):=(v_{\lambda_0}^1(x))^{m_1}(v_{\lambda_0}^2(x))^{m_2}\dots 
(v_{\lambda_0}^n(x))^{m_n},
\end{equation}
which gives
\begin{equation}\label{betav01}
\beta^{\alpha,\Phi}_{(\lambda_0(m,x)\lambda_0)}(T)=\operatorname{Ad}v_{\lambda_0}^m(x)(T).
\end{equation}
There is also a map from $\pi(U_{\lambda_0})\cap \pi(U_{\lambda_1})\to \operatorname{Aut}\K$ given by
\begin{equation*}
z\mapsto \beta^{\alpha,\Phi}_{(\lambda_0(-s_{\lambda_0\lambda_1}(z),
\sigma_{\lambda_1}(z))\lambda_1)}\,.
\end{equation*}
Since $\pi(\U)$ is a good cover of $Z$, the open set $\pi(U_{\lambda_0})\cap \pi(U_{\lambda_1})$ 
is contractible, and there exists a continuous map $v_{\lambda_0\lambda_1}:
\pi(U_{\lambda_0})\cap \pi(U_{\lambda_1})\to U(\H)$ that satisfies
\begin{equation}\label{betav10}
\beta^{\alpha,\Phi}_{(\lambda_0(-s_{\lambda_0\lambda_1}(z),\sigma_{\lambda_1}(z))\lambda_1)}
=\operatorname{Ad}v_{\lambda_0\lambda_1}(z).
\end{equation}

Next, $\Xi_{\U,s}[CT(X,\delta),\alpha]$ will be a triple $[\phi(\alpha)^{20},
\phi(\alpha)^{11},\phi(\alpha)^{02}]$.
Define the last component $\phi(\alpha)^{02}\in \check{C}^0(\pi(\U),\mathcal{M})$ by:
\begin{equation}\label{phi02defn}
\phi(\alpha)^{02}_{\lambda_0}(z)_{ij}:=v_{\lambda_0}^j(\sigma_{\lambda_0}
(z))v_{\lambda_0}^i(\sigma_{\lambda_0}(z))v_{\lambda_0}^j(\sigma_{\lambda_0}
(z))^*v_{\lambda_0}^i(\sigma_{\lambda_0}(z))^*.
\end{equation}
(This equation should be compared with Eqn.~(\ref{MackeyDefn})). 
For the middle component, note that since $\alpha$ is a homomorphism 
and $\R^n$ is abelian we have
\[
\operatorname{Ad} \left[v_{\lambda_0\lambda_1}(z)v_{\lambda_0}^m(z)\right]
=\operatorname{Ad}\left[ v_{\lambda_1}^m(z)v_{\lambda_0\lambda_1}(z)\right].
\]
A computation then shows that the map
$m\mapsto v_{\lambda_1}^m(z)v_{\lambda_0\lambda_1}
(z)v_{\lambda_0}^m(z)^*v_{\lambda_0\lambda_1}(z)^*$
is a homomorphism from $\Z^n$ to $\T$. This being the case, 
we may define $\phi(\alpha)^{11}\in\check{C}^1(\pi(\U),\hat{{\mathcal N}})$ by
\begin{equation}\label{phi11defn}
\phi(\alpha)^{11}_{\lambda_0\lambda_1}(m,z):=v_{\lambda_1}^m(z)
v_{\lambda_0\lambda_1}(z)v_{\lambda_0}^m(z)^*v_{\lambda_0\lambda_1}(z)^*.
\end{equation}
Finally,  to define the  third 
component, we use a similar calculation to the previous one to show
\[\operatorname{Ad}  \left[v_{\lambda_1\lambda_2}(z)
v_{\lambda_0\lambda_1}(z)\right]=\operatorname{Ad} 
\left[ v_{\lambda_0\lambda_2}(z)v_{\lambda_0}^{-\check\partial 
s_{\lambda_0\lambda_1\lambda_2}(z)}(z)\right] .
\]
Therefore we may define the cochain $\phi(\alpha)^{20}\in\check{C}^2(\pi(\U),{\mathcal S})$ by
\begin{equation}\label{phi20defn}
\phi(\alpha)^{20}_{\lambda_0\lambda_1\lambda_2}(z):=
v_{\lambda_1\lambda_2}(z)v_{\lambda_0\lambda_1}(z)v_{\lambda_0}^{-\check\partial 
s_{\lambda_0\lambda_1\lambda_2}(z)}(z)^*v_{\lambda_0\lambda_2}(z)^*.
\end{equation}
It is then immediate that the triple $(\phi(\alpha)^{20},\phi(\alpha)^{11},\phi(\alpha)^{02})$ 
is a cochain in $C^2_{\check\partial s}(\pi(\U),{\mathcal S})$, and with some difficulty one 
can show that it is $D_{\check\partial s}$-closed \cite{BouCarRat2}. The map $\Xi_{\U,s}$ is then defined by
\begin{equation}\label{Xi}\Xi_{\U,s}:(CT(X,\delta),\alpha)\mapsto[\phi(\alpha)^{20},
\phi(\alpha)^{11},\phi(\alpha)^{02}].\end{equation}
With a further difficult computation, one can show that $\Xi_{\U,s}$ is constant 
on outer conjugacy classes, and hence defines a map with domain $\Br_{\R^n}(X)$.

To complete the discussion of Theorem \ref{MainSquare}, we need to formulate the right downward arrow
$\check{H}^3(X,\underline{\Z})|_{\pi^{0,3}=0}\stackrel{\cong}{\longrightarrow}
{\mathbb H}^3_{\check\partial s}(\pi(\U),\underline{\Z})$,
and  commutativity of the diagram. This arrow is only defined implicitly by chasing around a diagram. 
This is unfortunate, because an explicit map may allow generalisation to a dimensional 
reduction isomorphism from all of $\check{H}^3(X,\underline{\Z})$ to a four-column 
complex generalising $\mathbb{H}^3_{\check\partial s}(\pi(\U),\mathcal{S})$. 
The first step is thus to explain the purpose of the restriction to 
$\check{H}^3(X,\underline{\Z})|_{\pi^{0,3}=0}$.

\begin{theorem}[{\cite[Thm 2.2]{MatRos06}}]
Let $\delta\in\check{H}^3(X,\underline{\Z})$. Then $\delta$ is in the image of the forgetful homomorphism
$F:\operatorname{Br}_{\R^n}(X)\to \check{H}^3(X,\underline{\Z})$
if and only if $\pi^{0,3}(\delta)=0$.
\end{theorem}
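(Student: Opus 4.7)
The plan is to derive this theorem as a direct consequence of Theorem \ref{MainSquare} and Corollary \ref{dimreducedlongexact}. First I would fix a good open cover $\U$ of $X$ and a cochain $s\in\check{C}^1(\pi(\U),\mathcal{R}^n)$ as produced by Theorem \ref{MainSquare}, so that the isomorphism $\Xi_{\U,s}$, the right vertical isomorphism $\check{H}^3(X,\underline{\Z})|_{\pi^{0,3}=0}\cong\mathbb{H}^3_{\check\partial s}(\pi(\U),\underline{\Z})$, and the commutative square they sit in are all at our disposal.

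For the ``only if'' direction, observe that the commutative diagram of Theorem \ref{MainSquare} already asserts the top horizontal arrow to have codomain $\check{H}^3(X,\underline{\Z})|_{\pi^{0,3}=0}$. Since this top arrow is precisely the forgetful homomorphism $F$ corestricted onto this subgroup, one has $F[CT(X,\delta),\alpha]\in\check{H}^3(X,\underline{\Z})|_{\pi^{0,3}=0}$ for every $(CT(X,\delta),\alpha)\in\mathfrak{Br}_{\R^n}(X)$, and hence $\pi^{0,3}\delta=0$ whenever $\delta$ is in the image of $F$.

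For the ``if'' direction, I would apply Corollary \ref{dimreducedlongexact}: the long exact sequence there terminates with $\mathbb{H}^2_{\check\partial s}(\pi(\U),\mathcal{S})\to\mathbb{H}^3_{\check\partial s}(\pi(\U),\underline{\Z})\to 0$, so the connecting map is surjective. Given $\delta\in\check{H}^3(X,\underline{\Z})$ with $\pi^{0,3}\delta=0$, I would transport it to $\mathbb{H}^3_{\check\partial s}(\pi(\U),\underline{\Z})$ via the right vertical isomorphism of Theorem \ref{MainSquare}, lift to some $[\phi]\in\mathbb{H}^2_{\check\partial s}(\pi(\U),\mathcal{S})$ using this surjectivity, and apply $\Xi_{\U,s}^{-1}$ to obtain a class $[A,\alpha]\in\Br_{\R^n}(X)$. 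Commutativity of the diagram in Theorem \ref{MainSquare} then forces $F[A,\alpha]=\delta$.

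The main obstacle, if one wished to prove the ``only if'' direction independently of Theorem \ref{MainSquare}, would be to show directly that $\iota_x^*\delta=0$ for every $x$ and every equivariant pair $(CT(X,\delta),\alpha)$. The direct approach would identify $\iota_x^*(CT(X,\delta),\alpha)$ with an induced algebra $\operatorname{Ind}_{\Z^n}^{\R^n}(\K,\operatorname{Ad}u)$ via the Echterhoff result recalled in the Introduction, and then trivialise the associated $PU(\H)$-bundle over $\T^n$ by exploiting contractibility of $U(\H)$ in the strong operator topology to extend the locally inner cocycle $u$, up to scalar phases, to a continuous map $\R^n\to U(\H)$ that intertwines the lattice action appropriately. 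The delicate point is the potentially nonzero Mackey obstruction of $u$, which complicates the extension step; this subtlety is bypassed entirely by simply invoking Theorem \ref{MainSquare}.
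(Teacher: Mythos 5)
This statement is not proved in the paper at all: it is quoted verbatim as a citation from Mathai--Rosenberg \cite[Thm 2.2]{MatRos06}, and the paper then \emph{uses} it (together with Proposition \ref{BrauerKernel}, also from \cite{MatRos06}) as an ingredient in constructing the right-hand vertical isomorphism of Theorem \ref{MainSquare} and in the final Five-Lemma argument. Consequently, your derivation is circular. Specifically, the paper only knows that the unlabeled top arrow of Theorem \ref{MainSquare} may be taken with codomain $\check{H}^3(X,\underline{\Z})|_{\pi^{0,3}=0}$ \emph{because} of the ``only if'' direction of the very theorem you are trying to prove, and it only knows that the arrow $\check{H}^3(X,\underline{\Z})|_{\pi^{0,3}=0}\to{\mathbb H}^3_{\check\partial s}(\pi(\U),\underline{\Z})$ is well defined and an isomorphism because of the exact sequence of Proposition \ref{BrauerKernel} (in particular the surjectivity of $\mathrm{Br}_{\R^n}(X)\to\check{H}^3(X,\underline{\Z})|_{\pi^{0,3}=0}$, which is the ``if'' direction plus a little more). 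Reading both directions of the statement back off the diagram of Theorem \ref{MainSquare} therefore assumes exactly what is to be shown.

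A second, more minor point: even setting circularity aside, your ``only if'' step rests on the identification of the top horizontal arrow with the corestricted forgetful homomorphism $F$; this is consistent with the surrounding text but is never stated in Theorem \ref{MainSquare}, so it would need to be justified rather than asserted. Your closing paragraph, sketching a direct argument that $\iota_x^*\delta=0$ by restricting $(CT(X,\delta),\alpha)$ to a fibre and analysing the resulting induced algebra and its Mackey obstruction, is much closer in spirit to what a genuine (non-circular) proof requires, and is essentially the route taken in \cite{MatRos06}; but as you note yourself, you have only gestured at it, and the delicate step --- trivialising the associated $PU(\H)$-bundle over $\T^n$ in the presence of a possibly nonzero Mackey obstruction --- is precisely where the real content of the theorem lies and is left unaddressed.
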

Next, as every class in $\check{H}^3(X,\underline{\Z})|_{\pi^{0,3}=0}$ lifts to 
$\mbox{Br}_{\R^n}(X)$ we can implicitly define a map  from 
$\check{H}^3(X,\underline{\Z})|_{\pi^{0,3}=0}$ to 
${\mathbb H}^3_{\check\partial s}(\pi(\U),\underline{\Z})$ as the composition

\centerline{\xymatrix{
\mathrm{Br}_{\R^n}(X)\ar[r]^{\Xi_{\U,s}}&{\mathbb H}^2_{\check\partial s}(\pi(\U),{\mathcal S})\ar[d]\\
\check{H}^3(X,\underline{\Z})|_{\pi^{0,3}=0}\ar[u]^{F^{-1}}&
{\mathbb H}^3_{\check\partial s}(\pi(\U),\underline{\Z}),
}}

\noindent provided the image in ${\mathbb H}^3_{\check\partial s}(\pi(\U),\underline{\Z})$ 
is independent of the choice of the lift\\ $\check{H}^3(X,\underline{\Z})
|_{\pi^{0,3}=0}\to \mathrm{Br}_{\R^n}(X)$. 
{}From  {\cite[Thm 2.3]{MatRos06}} we have
\begin{proposition}\label{BrauerKernel}
There is an exact sequence
\[H^2_M(\R^n,C(X,\T))\to \mathrm{Br}_{\R^n}(X)\to \check{H}^3(X,\underline{\Z})\vline_{\pi^{0,3}=0}\to 0.\]
\end{proposition}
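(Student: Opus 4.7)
The plan is to verify exactness at the two middle terms. The map on the right is the forgetful homomorphism $F:\operatorname{Br}_{\R^n}(X)\to \check{H}^3(X,\underline{\Z})$ from Section~\ref{ctstracesection} sending $[CT(X,\delta),\alpha]\mapsto\delta$; its image equals $\check{H}^3(X,\underline{\Z})|_{\pi^{0,3}=0}$ by the Mathai--Rosenberg theorem quoted immediately above, so surjectivity on the right is already in hand. What remains is to construct a map $\Psi:H^2_M(\R^n,C(X,\T))\to\operatorname{Br}_{\R^n}(X)$ whose image is exactly $\ker F$.

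To construct $\Psi$, given a Moore 2-cocycle $c$ (with $\R^n$ acting on $C(X,\T)$ via $\tau$), I would choose a Borel family $V:\R^n\times X\to U(\H)$ satisfying the twisted unitary cocycle identity $V_g(x)V_h(g^{-1}x)=c(g,h)(x)V_{g+h}(x)$; existence uses the contractibility of $U(\H)$ in the strong topology (\cite[Thm 4.72]{RaeWill98}) together with a measurable selection. The formula $\alpha^c_g(a)(x):=V_g(x)\,a(g^{-1}x)\,V_g(x)^*$ then defines a strongly continuous action of $\R^n$ on $C_0(X,\K)$ covering $\tau$, since the central $\T$-valued ambiguity in $V$ cancels on passing to $\operatorname{Aut}\K$. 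A short computation shows that a different Borel lift, or a cohomologous cocycle, yields an exterior equivalent (hence outer conjugate) action, so $\Psi[c]:=[(C_0(X,\K),\alpha^c)]$ is a well-defined homomorphism.

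For exactness at $\operatorname{Br}_{\R^n}(X)$ the containment $\operatorname{image}(\Psi)\subseteq\ker F$ is immediate, since every element of the image has underlying algebra $C_0(X,\K)$ with trivial Dixmier--Douady class. For the reverse containment, suppose $[A,\alpha]\in\ker F$. Then $A\cong_{C_0(X)} C_0(X,\K)$, and after transport we may take $A=C_0(X,\K)$. Any automorphism of $C_0(X,\K)$ covering $\tau_g$ has the pointwise form $a(x)\mapsto\sigma_g(x)(a(g^{-1}x))$ for a continuous $\sigma:\R^n\times X\to\operatorname{Aut}\K$, and the homomorphism law for $\alpha$ forces the 1-cocycle identity $\sigma_{g+h}(x)=\sigma_g(x)\sigma_h(g^{-1}x)$. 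Picking a Borel lift $v:\R^n\times X\to U(\H)$ and setting $c(g,h)(x):=v_g(x)v_h(g^{-1}x)v_{g+h}(x)^*\in C(X,\T)$ produces a Moore 2-cocycle whose class is independent of the lift and satisfies $\Psi[c]=[A,\alpha]$.

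The main obstacle I expect is the Borel--continuous interface. Moore cohomology classes are represented by Borel cochains, but the actions in $\operatorname{Br}_{\R^n}(X)$ must be strongly continuous; producing a Borel $V$ satisfying a prescribed twisted unitary cocycle identity, verifying that the resulting $\alpha^c$ is strongly continuous on $C_0(X,\K)$, and matching up exterior equivalence of actions with Moore coboundaries all require selection and regularity arguments of the sort developed by Moore for cohomology with locally compact Polish coefficients. These technical details are the substance of \cite{MatRos06}.
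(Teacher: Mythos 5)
The paper offers no argument for Proposition~\ref{BrauerKernel} beyond a citation to \cite[Thm 2.3]{MatRos06}; your proposal reconstructs a first-principles proof, which is genuinely different in that sense. Your outline---surjectivity of the forgetful map $F$ onto $\check{H}^3(X,\underline{\Z})|_{\pi^{0,3}=0}$ via the quoted \cite[Thm 2.2]{MatRos06}, the twisted action $\alpha^c_g=\operatorname{Ad}V_g\circ\tau_g$ on $C_0(X,\K)$, well-definedness of $\Psi$ on cohomology classes via exterior equivalence, and recovery of a Moore class from any $[A,\alpha]\in\ker F$ using the $\operatorname{Aut}\K$-valued $1$-cocycle $\sigma$ and a Borel lift---has the right shape and tracks what Mathai--Rosenberg must actually prove.

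Two points to tighten. First, the claim that existence of a Borel $V$ with $V_g(x)V_h(g^{-1}x)=c(g,h)(x)V_{g+h}(x)$ ``uses the contractibility of $U(\H)$'' is not a valid justification: contractibility gives vanishing of $\check{H}^1$ with $U(\H)$-coefficients, which is a different statement. The correct and elementary existence argument is the twisted left regular representation: take $\H=L^2(\R^n)$ and set $V_g(x)\xi(t):=c(g,t-g)(x)\xi(t-g)$; the Moore $2$-cocycle identity for $c$ then yields the twisted unitary cocycle identity for $V$ by a short computation. This is precisely the operator $L_{\tilde{g}}$ that the paper writes down two displays after the proposition. Second, the Borel--continuous issues you flag (strong continuity of $\alpha^c$, matching exterior equivalence with Moore coboundaries) are genuine gaps in the sketch; the paper sidesteps them entirely by invoking \cite[Lemma 2.1]{MatRos05}, which identifies $H^2_M(\R^n,C(X,\T))$ with $C(Z,M_n^u(\R))$ via an explicit \emph{continuous} (polynomial) cocycle $\tilde{g}$ in each class, so that $L_{\tilde{g}}$ is manifestly strongly continuous and the measurable-selection difficulties disappear. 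Working with that nice representative from the outset, rather than an arbitrary Borel cocycle, would close the gaps you acknowledge instead of deferring them back to \cite{MatRos06}.
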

To find the image of $H^2_M(\R^n,C(X,\T))$ under the composition
\[H^2_M(\R^n,C(X,\T))\to \mathrm{Br}_{\R^n}(X)\stackrel{\Xi_{\U,s}}{\longrightarrow}
{\mathbb H}^2_{\check\partial s}(\pi(\U),{\mathcal S}),\]
we now describe $H^2_M(\R^n,C(X,\T))\to \mathrm{Br}_{\R^n}(X)$ in more detail. 
First, we know from \cite[Lemma 2.1]{MatRos05} that there is an isomorphism 
$C(Z,M_n^u(\R)) \cong H^2_M(\R^n,C(X,\T))$, taking $g\in C(Z,M_n^u(\R))$ to  the 
cocycle $\tilde{g}$ in $Z^2_M(\R^n,C(X,\T))$, defined by
\[(s,t)\mapsto \left(x\mapsto \left[\sum_{1\leq i<j\leq n}g(\pi(x))_{ij}t_is_j\right]_{\R/\Z}\right).\]
We find the image of this in $\mathrm{Br}_{\R^n}(X)$ as follows. Let $\H=L^2(\R^n)$, 
and define a continuous map $L_{\tilde{g}}:\R^n\to C(X,U(L^2(\R^n)))$ with the formula
\[[L_{\tilde{g}}(s)(x)](\xi)(t):=\tilde{g}(s,t-s)(x)\xi(t-s),\quad s,t\in\R^n, x\in X, \xi\in L^2(\R^n).\]
This satisfies
\[
L_{\tilde{g}}(s)(x)[L_{\tilde{g}}(t)(-s\cdot x)\xi](r)=\tilde{g}(s,t)(x)[L_{\tilde{g}}(s+t)(x)\xi](r).\]
It follows that the map $H^2_M(\R^n,C(X,\T))\cong C(Z,M_n^u(\R))\to 
\mathrm{Br}_{\R^n}(X)$ is given (cf. \cite[Thm 5.1]{CroKumRaeWil97}) by
$g\mapsto [C_0(X,\K),\operatorname{Ad}L_{\tilde{g}}\circ \tau].$

\begin{lemma}\label{Liftindependence}
Let $(X,\U,s)$ be in the standard setup. Then the composition
$$C(Z,M_n^u(\R))\rightarrow \mathrm{Br}_{\R^n}(X)\stackrel{\Xi_{\U,s}}{\rightarrow}
 {\mathbb H}^2_{\check\partial s}(\pi(\U),{\mathcal S})\rightarrow
 {\mathbb H}^3_{\check\partial s}(\pi(\U),\underline{\Z})$$
\noindent is the zero map.
\end{lemma}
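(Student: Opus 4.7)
The plan is to exhibit, for each $g\in C(Z,M_n^u(\R))$, a cocycle in $C^2_{\check\partial s}(\pi(\U),\mathcal{R})$ whose mod-$\Z$ reduction represents $\Xi_{\U,s}[C_0(X,\K),\operatorname{Ad}L_{\tilde g}\circ\tau]$. Corollary~\ref{dimreducedlongexact} identifies the map $\mathbb{H}^2_{\check\partial s}(\pi(\U),\mathcal{S})\to\mathbb{H}^3_{\check\partial s}(\pi(\U),\underline{\Z})$ with the Bockstein of $0\to\Z\to\R\to\T\to1$, so its kernel is precisely the image of the reduction map from $\mathbb{H}^2_{\check\partial s}(\pi(\U),\mathcal{R})$, and the lemma follows at once from the existence of such a lift.

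First I would compute $\Xi_{\U,s}$ explicitly on $(C_0(X,\K),\operatorname{Ad}L_{\tilde g}\circ\tau)$. Since the Dixmier-Douady class is trivial, take $\Phi_{\lambda_0}=\operatorname{id}$. Because $\Z^n\subset\R^n$ acts trivially on $X$, equation~(\ref{chapter0eq0}) gives $v^i_{\lambda_0}(x)=L_{\tilde g}(e_i)(x)$, while (\ref{betav10}) gives $v_{\lambda_0\lambda_1}(z)=L_{\tilde g}(-s_{\lambda_0\lambda_1}(z))(\sigma_{\lambda_1}(z))$. Plugging these into (\ref{phi02defn}), (\ref{phi11defn}) and (\ref{phi20defn}) and repeatedly applying the cocycle identity
\begin{equation*}
L_{\tilde g}(s)(x)\,L_{\tilde g}(t)(-s\cdot x)=\tilde g(s,t)(x)\,L_{\tilde g}(s+t)(x),
\end{equation*}
together with the fact that $\tilde g(s,t)(x)$ depends only on $\pi(x)$, shows that each of $\phi(\alpha)^{02},\phi(\alpha)^{11},\phi(\alpha)^{20}$ is the mod-$\Z$ reduction of an explicit real-valued polynomial in the components of $g$ and the $s_{\lambda_i\lambda_j}(z)$; in particular $\phi(\alpha)^{02}_{\lambda_0}(z)_{ij}=[g_{ij}(z)]_{\R/\Z}$.

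Next, set $\Phi^{02}:=g$ and let $\Phi^{11},\Phi^{20}$ be the natural real-valued lifts of $\phi(\alpha)^{11},\phi(\alpha)^{20}$ obtained by deleting the $[\,\cdot\,]_{\R/\Z}$ brackets in the explicit formulas above. I would then verify directly that $(\Phi^{20},\Phi^{11},\Phi^{02})\in C^2_{\check\partial s}(\pi(\U),\mathcal{R})$ satisfies $D_{\check\partial s}(\Phi^{20},\Phi^{11},\Phi^{02})=0$. The bottom identity $\check\partial\Phi^{02}=0$ is automatic because $g$ is globally defined on $Z$; the middle identity $\check\partial\Phi^{11}+\Phi^{02}\cup_1\check\partial s=0$ is immediate from $\check\partial s_{\lambda_0\lambda_1\lambda_2}(z)=F_{\lambda_0\lambda_1\lambda_2}(z)$ together with the definition of $\cup_1$ in integer coefficients.

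The main obstacle is the top identity $\check\partial\Phi^{20}=\Phi^{11}\cup_1\check\partial s+\Phi^{02}\cup_2 C(\check\partial s)$. This requires carefully expanding the formula for $\Phi^{20}$ produced in step~one and matching the quadratic cross-terms in $s_{\lambda_i\lambda_j}\cdot s_{\lambda_j\lambda_k}$ against $C(F)$ via Steenrod's relation $\check\partial C(F)_{ij}=F_i\cup F_j-F_j\cup F_i$; this is a direct but bookkeeping-heavy polynomial identity, made nontrivial by the fact that the cocycle property of the $\mathcal{S}$-valued triple only guarantees the difference is $\Z$-valued. Once this identity holds as an $\R$-valued equation, the class $[(\Phi^{20},\Phi^{11},g)]\in\mathbb{H}^2_{\check\partial s}(\pi(\U),\mathcal{R})$ reduces mod $\Z$ to $\Xi_{\U,s}[C_0(X,\K),\operatorname{Ad}L_{\tilde g}\circ\tau]$, and the exactness of Corollary~\ref{dimreducedlongexact} forces its image in $\mathbb{H}^3_{\check\partial s}(\pi(\U),\underline{\Z})$ to vanish, completing the proof.
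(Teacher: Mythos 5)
Your approach is essentially the same as the paper's: you compute $\Xi_{\U,s}[C_0(X,\K),\operatorname{Ad}L_{\tilde g}\circ\tau]$ explicitly via the $L_{\tilde g}$-cocycle identity, lift the resulting $\mathcal{S}$-valued triple to a real-valued triple $(\Phi^{20},\Phi^{11},g)\in C^2_{\check\partial s}(\pi(\U),\mathcal{R})$, and show $D_{\check\partial s}$ kills it, which is precisely what the paper does by chasing the zig-zag for the connecting homomorphism and verifying that all three components of $D_{\check\partial s}$ of the lift vanish. The one cosmetic inaccuracy is your attribution of the top-component cancellation to Steenrod's relation $\check\partial C(F)_{ij}=F_i\cup F_j-F_j\cup F_i$ — that relation is used to prove $D_F^2=0$ in general, but the actual verification here is a direct expansion of $\check\partial\Phi^{20}$ in which $C(\check\partial s)$ appears through $\Phi^{02}\cup_2 C(\check\partial s)$ and the quadratic terms in $s$ cancel term-by-term, exactly as in the paper's computation of $\Delta(\cdot)^{30}$.
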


\noindent We include a proof for this lemma to help the reader understand Section \ref{UniversalSection}.

\begin{proof}[of Lemma \ref{Liftindependence}]
Let $g\in C(Z,M_n^u(\R))$ have image $(C_0(X,\K),\operatorname{Ad}L_{\tilde{g}}\circ \tau)
\in\mathfrak{Br}_{\R^n}(X)$. Then, if we take the identity maps as the local trivialisations, we have
\begin{align*}
\beta^{\alpha,\Phi}_{(\lambda_0(m,\sigma_{\lambda_0}(z))\lambda_0)} & =
\operatorname{Ad}L_{\tilde{g}(\cdot,\cdot)(\sigma_{\lambda_0}(z))}(m), 
  \mbox{
and}\quad \\
\beta^{\alpha,\Phi}_{(\lambda_0(-s_{\lambda_0\lambda_1}(z),\sigma_{\lambda_1}(z))\lambda_1)}
& =\operatorname{Ad}L_{\tilde{g}(\cdot,\cdot)(\sigma_{\lambda_1}(z))}(-s_{\lambda_0\lambda_1}(z))\,.
\end{align*}
Therefore we can define
\begin{align*}
u_{\lambda_0}^m(z):=&(L_{\tilde{g}(\cdot,\cdot)(\sigma_{\lambda_0}(z))}(e_1))^{m_1}
\dots(L_{\tilde{g}(\cdot,\cdot)(\sigma_{\lambda_0}(z))}(e_n))^{m_n} \quad\mbox{and}\\
u_{\lambda_0\lambda_1}(z):=&L_{\tilde{g}(\cdot,\cdot)(\sigma_{\lambda_1}(z))}
(-s_{\lambda_0\lambda_1}(z)).
\end{align*}
We can then calculate the image $[\phi(\operatorname{Ad}L\circ\tau)^{20},\phi
(\operatorname{Ad}L\circ\tau)^{11},\phi(\operatorname{Ad}L\circ\tau)^{02}]$ of 
$(C_0(X,\K),\operatorname{Ad}L_{\tilde{g}}\circ \tau)$ in ${\mathbb H}^2_{\check\partial s}
(\pi(\U),{\mathcal S})$ using
\begin{equation}\label{Lequation}
L_{\tilde{g}}(s)(x)L_{\tilde{g}}(t)(-s\cdot x)=\tilde{g}(s,t)(x)L_{\tilde{g}}(s+t)(x).
\end{equation}
as follows. By the definitions from Eqns.~(\ref{phi02defn}) and (\ref{phi11defn}), 
the last two components are
\begin{align*}
\phi(\operatorname{Ad}L\circ\tau)^{02}_{\lambda_0}(z)_{ij}
&\quad=L_{\tilde{g}(\cdot,\cdot)(\sigma_{\lambda_0}(z))}(e_j)
L_{\tilde{g}(\cdot,\cdot)(\sigma_{\lambda_0}(z))}(e_i) \\
& \quad \quad \times 
L_{\tilde{g}(\cdot,\cdot)(\sigma_{\lambda_0}(z))}(e_j)^*
L_{\tilde{g}(\cdot,\cdot)(\sigma_{\lambda_0}(z))}(e_i)^*\\
&\quad=\tilde{g}(e_j,e_i)(\sigma_{\lambda_0}(z))\quad=\quad [g(z)_{ij}]_{\R/\Z},\quad\quad\\
\phi(\operatorname{Ad}L\circ\tau)^{11}_{\lambda_0\lambda_1}(m,z)
&\quad=u_{\lambda_1}^m(z)u _{\lambda_0\lambda_1}(z)u_{\lambda_0}^m(z)^*
u_{\lambda_0\lambda_1}(z)^* \quad\mbox{ by } (\ref{Lequation})\times 2\\
&\quad=L_{\tilde{g}(\cdot,\cdot)(\sigma_{\lambda_1}(z))}(m)
L_{\tilde{g}(\cdot,\cdot)(\sigma_{\lambda_1}(z))}(-s_{\lambda_0\lambda_1}(z))\\
&\quad\quad\times L_{\tilde{g}(\cdot,\cdot)(\sigma_{\lambda_0}(z))}(m)^*
L_{\tilde{g}(\cdot,\cdot)(\sigma_{\lambda_1}(z))}(-s_{\lambda_0\lambda_1}(z))^*\\
&\quad=\tilde{g}(m,-s_{\lambda_0\lambda_1}(z))(\sigma_{\lambda_1}(z))\tilde{g}
(-s_{\lambda_0\lambda_1}(z),m)(\sigma_{\lambda_1}(z))^*\\
&\quad=\left[\sum_{1\leq i<j\leq n}g(z)_{ij}(m_is_{\lambda_0\lambda_1}(z)_j-
s_{\lambda_0\lambda_1}(z)_im_j)\right]_{\R/\Z},\quad\quad\quad\quad
\end{align*}
The last component is computed using Eqn.~(\ref{phi20defn}):
\begin{align*}
\phi & (\operatorname{Ad}L\circ\tau)^{20}_{\lambda_0\lambda_1\lambda_2}(z)
= u _{\lambda_1\lambda_2}(z)u _{\lambda_0\lambda_1}(z)
u_{\lambda_0}^{-\check\partial s_{\lambda_0\lambda_1\lambda_2}(z)}(z)
u _{\lambda_0\lambda_2}(z)^*\\
&\quad=L_{\tilde{g}(\cdot,\cdot)(\sigma_{\lambda_2}(z))}(-s_{\lambda_1\lambda_2}(z))
L_{\tilde{g}(\cdot,\cdot)(\sigma_{\lambda_1}(z))}(-s_{\lambda_0\lambda_1}(z))\\
&\quad\quad\times L_{\tilde{g}(\cdot,\cdot)(\sigma_{\lambda_0}(z))}
(-\check\partial s_{\lambda_0\lambda_1\lambda_2}(z))^*L_{\tilde{g}(\cdot,\cdot)
(\sigma_{\lambda_2}(z))}(-s_{\lambda_0\lambda_2}(z))^*\\
&\quad=\tilde{g}(-s_{\lambda_1\lambda_2}(z),-s_{\lambda_0\lambda_1}(z))\tilde{g}
(-s_{\lambda_0\lambda_2}(z),-\check\partial s_{\lambda_0\lambda_1\lambda_2}(z))^*\\
&\quad=\left[\sum_{1\leq i<j\leq n}g(z)_{ij}(s_{\lambda_0\lambda_1}(z)_i
s_{\lambda_1\lambda_2}(z)_j-\check\partial s_{\lambda_0\lambda_1\lambda_2}
(z)_is_{\lambda_0\lambda_2}(z)_j)\right]_{\R/\Z}.
\end{align*}

The map from ${\mathbb H}^2_{\check\partial s}(\pi(\U),{\mathcal S})$ to 
${\mathbb H}^3_{\check\partial s}(\pi(\U),\underline{\Z})$ is just a standard 
``connecting homomorphism", computing using a zig-zag:

\smallskip

\centerline{\xymatrix{
Z^3_{\check\partial s}(\pi(\U),\underline{\Z})&Z^3_{\check\partial s}(\pi(\U),
{\mathcal R})\ar[l]& \\
&C^2_{\check\partial s}(\pi(\U),{\mathcal R})\ar[u]^{D_{\check\partial s}}&
Z^2_{\check\partial s}(\pi(\U),{\mathcal S})\ar[l]
}}

\smallskip

\noindent Thus, the image of $[\phi(\operatorname{Ad}L\circ\tau)^{20},
\phi(\operatorname{Ad}L\circ\tau)^{11},\phi(\operatorname{Ad}L\circ\tau)^{02}]$ 
in ${\mathbb H}^3_{\check\partial s}(\pi(\U),\underline{\Z})$ is a triple
\begin{align*}
&\big[\Delta(\phi(\operatorname{Ad}L\tau)^{20},
\phi(\operatorname{Ad}L\tau)^{11},\phi(\operatorname{Ad}L\tau)^{02})^{30}, \\
&\quad \Delta(\phi(\operatorname{Ad}L\tau)^{20},
\phi(\operatorname{Ad}L\tau)^{11},\phi(\operatorname{Ad}L\tau)^{02})^{21},\\
&\quad\quad\Delta(\phi(\operatorname{Ad}L\tau)^{21},
\phi(\operatorname{Ad}L\tau)^{11},\phi(\operatorname{Ad}L\tau)^{02})^{12}\big].
\end{align*}
The latter two terms are relatively easy to compute:
\begin{align*}
&\Delta(\phi(\operatorname{Ad}L\tau)^{20},\phi(\operatorname{Ad}L\tau)^{11},
\phi(\operatorname{Ad}L\tau)^{02})_{\lambda_0\lambda_1\lambda_2}^{21}(z)_l\\
&\quad=\check{\partial}\left[\sum_{1\leq i<j\leq n}g_{ij}(z)((e_l)_is_{\cdot\cdot}(z)_j-
s_{\cdot\cdot}(z)_i(e_l)_j)\right]_{_{\lambda_0\lambda_1\lambda_2}}\\
& \quad\quad\quad +\sum_{1\leq i<j\leq n} g(z)_{ij}(F_{\lambda_0\lambda_1
\lambda_2}(z)_i(e_l)_j-(e_l)_iF_{\lambda_0\lambda_1\lambda_2}(z)_j)=0.
\end{align*}
$$\Delta(\phi(\operatorname{Ad}L\tau)^{20},\phi(\operatorname{Ad}L\tau)^{11},
\phi(\operatorname{Ad}L\tau)^{02})_{\lambda_0\lambda_1}^{12}(z)_{ij}=g(z)_{ij}-g(z)_{ij}=0.$$
The first term is similar, but requires one to compute the \v Cech differential $\check\partial$ of
\[\bigcap_{k=0}^2 \pi(U_{\lambda_k})\ni z\mapsto \sum_{1\leq i<j\leq n}g(z)_{ij}
(s_{\lambda_0\lambda_1}(z)_is_{\lambda_1\lambda_2}(z)_j-\check\partial 
s_{\lambda_0\lambda_1\lambda_2}(z)_is_{\lambda_0\lambda_2}(z)_j).\]
We find this differential has the formula
\begin{align*}\bigcap_{k=0}^3 \pi(U_{\lambda_k})\ni z\mapsto &
\sum_{1\leq i<j\leq n}g(z)_{ij}\left[\check\partial 
s_{\lambda_0\lambda_1\lambda_2}(z)_i\check\partial 
s_{\lambda_0\lambda_2\lambda_3}(z)_j-\check\partial 
s_{\lambda_1\lambda_2\lambda_3}(z)_i\check\partial 
s_{\lambda_0\lambda_1\lambda_3}(z)_j\right.\\
&\quad \left. \check\partial s_{\lambda_1\lambda_2\lambda_3}(z)_i 
s_{\lambda_0\lambda_1}(z)_j-s_{\lambda_0\lambda_1}(z)_i\check\partial 
s_{\lambda_1\lambda_2\lambda_3}(z)_j\right].
\end{align*}
Then we see that
\begin{align*}
&\Delta(\phi(\operatorname{Ad}L\tau)^{20},\phi(\operatorname{Ad}L\tau)^{11},
\phi(\operatorname{Ad}L\tau)^{02})_{\lambda_0\lambda_1\lambda_2\lambda_3}^{30}(z)\\
=&\sum_{1\leq i<j\leq n}g(z)_{ij}\left[\check\partial s_{\lambda_0\lambda_1\lambda_2}(z)_i
\check\partial s_{\lambda_0\lambda_2\lambda_3}(z)_j-\check\partial 
s_{\lambda_1\lambda_2\lambda_3}(z)_i\check\partial 
s_{\lambda_0\lambda_1\lambda_3}(z)_j\right.\\
&\quad \left. +\check\partial s_{\lambda_1\lambda_2\lambda_3}(z)_i s_{\lambda_0
\lambda_1}(z)_j-s_{\lambda_0\lambda_1}(z)_i\check\partial 
s_{\lambda_1\lambda_2\lambda_3}(z)_j\right]\\
&\quad-\sum_{1\leq i<j\leq n}g(z)_{ij}(\check\partial s_{\lambda_1\lambda_2\lambda_3}
(z)_is_{\lambda_0\lambda_1}(z)_j-s_{\lambda_0\lambda_1}(z)_i\check\partial
 s_{\lambda_1\lambda_2\lambda_3}(z)_j)\\
&\quad-\sum_{1\leq i<j\leq n}g(z)_{ij}(\check\partial s_{\lambda_0\lambda_1
\lambda_2}(z)_i\check\partial s_{\lambda_0\lambda_2\lambda_3}(z)_j-\check\partial
 s_{\lambda_1\lambda_2\lambda_3}(z)_i\check\partial s_{\lambda_0\lambda_1\lambda_3}(z)_j)
\quad =0.
\end{align*}
\end{proof}

Applying Proposition \ref{BrauerKernel} and Lemma \ref{Liftindependence}
\begin{corollary}\label{NonclassicalLiftIndependence}
Let $(X,\U,s)$ be in the standard setup, and suppose 
$\delta\in \check{H}^3(X,\underline{\Z})|_{\pi^{0,3}=0}$. Then the image of 
$\delta$ under the composition

\centerline{\xymatrix{
\mathrm{Br}_{\R^n}(X)\ar[r]^{\Xi_{\U,s}}&{\mathbb H}^2_{\check\partial s}(\pi(\U),{\mathcal S})\ar[d]&\\
\check{H}^3(X,\underline{\Z})|_{\pi^{0,3}=0}\ar[u]^{F^{-1}}&
{\mathbb H}^3_{\check\partial s}(\pi(\U),\underline{\Z})
}}

\noindent is independent of the choice of lift 
$\check{H}^3(X,\underline{\Z})|_{\pi^{0,3}=0}\to \mathrm{Br}_{\R^n}(X)$.
\end{corollary}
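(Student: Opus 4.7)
The plan is a short diagram chase combining Proposition \ref{BrauerKernel} with Lemma \ref{Liftindependence}. Write $\iota:\mathbb{H}^2_{\check\partial s}(\pi(\U),\mathcal{S}) \to \mathbb{H}^3_{\check\partial s}(\pi(\U),\underline{\Z})$ for the connecting homomorphism coming from the long exact sequence of Corollary \ref{dimreducedlongexact}, and set $\Psi_{\U,s} := \iota \circ \Xi_{\U,s}$. Since both $\iota$ and $\Xi_{\U,s}$ are group homomorphisms (the latter being an isomorphism by Theorem \ref{MainSquare}), so is the composition $\Psi_{\U,s}:\Br_{\R^n}(X) \to \mathbb{H}^3_{\check\partial s}(\pi(\U),\underline{\Z})$. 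The statement to prove is exactly that $\Psi_{\U,s}$ factors through $F$.

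First I would pick any two lifts $a_1,a_2\in \Br_{\R^n}(X)$ of the given class $\delta\in \check{H}^3(X,\underline{\Z})|_{\pi^{0,3}=0}$ under the forgetful homomorphism $F$. Their difference $a_1 \cdot a_2^{-1}$ then lies in $\ker F$, which by the exactness of Proposition \ref{BrauerKernel} coincides with the image of the natural map $H^2_M(\R^n,C(X,\T))\to \Br_{\R^n}(X)$. Combining with the isomorphism $H^2_M(\R^n,C(X,\T))\cong C(Z,M_n^u(\R))$ recalled just before Lemma \ref{Liftindependence}, I can write $a_1 \cdot a_2^{-1}$ as the image of some $g\in C(Z,M_n^u(\R))$ under the composition appearing in that lemma.

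The conclusion is then immediate: Lemma \ref{Liftindependence} asserts precisely that $\Psi_{\U,s}$ vanishes on the image of $C(Z,M_n^u(\R))$, so $\Psi_{\U,s}(a_1) - \Psi_{\U,s}(a_2) = \Psi_{\U,s}(a_1 \cdot a_2^{-1}) = 0$, i.e.\ $\Psi_{\U,s}(a_1) = \Psi_{\U,s}(a_2)$. This is the lift-independence required. There is no real obstacle — the corollary is a one-line consequence of the two preceding results, the only nuance being the verification that $\iota$ and $\Xi_{\U,s}$ are homomorphisms (automatic, since $\iota$ arises from a long exact sequence of abelian-group-valued cohomologies, and $\Xi_{\U,s}$ is a group isomorphism by Theorem \ref{MainSquare}).
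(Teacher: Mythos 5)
Your proposal is correct and is essentially the paper's own argument: the paper dismisses this corollary with the single phrase ``Applying Proposition \ref{BrauerKernel} and Lemma \ref{Liftindependence}'', and your diagram chase (two lifts differ by an element of $\ker F = \operatorname{im}\bigl(H^2_M(\R^n,C(X,\T))\to \Br_{\R^n}(X)\bigr)$, on which the composition vanishes by Lemma \ref{Liftindependence}) is exactly the intended unwinding. The only point worth flagging is the logical ordering: you invoke Theorem \ref{MainSquare} to justify that $\Xi_{\U,s}$ is a homomorphism, but this corollary is itself used in the paper to build the right-hand vertical arrow in that theorem's square; the non-circular reading is that the homomorphism and isomorphism properties of $\Xi_{\U,s}$ are established independently (via \cite{BouCarRat2}) \emph{before} the diagram is assembled, and the present corollary only ensures the induced map $\check{H}^3(X,\underline{\Z})|_{\pi^{0,3}=0}\to \mathbb{H}^3_{\check\partial s}(\pi(\U),\underline{\Z})$ is well defined — so it would be cleaner to cite the homomorphism property of $\Xi_{\U,s}$ directly rather than through the statement of Theorem \ref{MainSquare}.
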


One can then prove that $\check{H}^3(X,\underline{\Z})|_{\pi^{0,3}=0}\to
{\mathbb H}^3_{\check\partial s}(\pi(\U),\underline{\Z})$ is an isomorphism 
using the following commutative diagram with exact rows and an application of the 
Five Lemma:

\centerline{\xymatrix{
C(Z,M_n^u(\R))\ar[r]\ar[d]^{\operatorname{id}}&\mathrm{Br}_{\R^n}(X)
\ar[d]^{\cong}\ar[r]&\check{H}^3(X,\underline{\Z})|_{\pi^{0,3}=0}\ar[r]\ar[d]& 0\ar[r]\ar[d]&0\ar[d]\\
C(Z,M_n^u(\R))\ar[r]&{\mathbb H}^2_{\check\partial s}(\pi(\U),{\mathcal S})
\ar[r]&{\mathbb H}^3_{\check\partial s}(\pi(\U),\underline{\Z})\ar[r]& 0\ar[r]& 0
}}

\noindent Now Theorem \ref{MainSquare} follows immediately.

\section{A Universal Example}\label{UniversalSection}

We begin by introducing some notation.

\begin{lemma}\label{Rnvaluedlifts}
Let $(X,\U,s)$ be in the standard setup. Define $\R^n/\Z^n$-equivariant functions 
$w_{\lambda_0}:\pi^{-1}(\pi(U_{\lambda_0}))\to \R^n/\Z^n$ by $w_{\lambda_0}(x)^{-1}x:=
\sigma_{\lambda_0}(\pi(x))$. Then for all indices ${\lambda_0}$ there is a unique continuous function $
\tilde{w}_{\lambda_0}:U_{\lambda_0}\to \R^n$ such that for all $x\in U_{\lambda_0}$,
$[\tilde{w}_{\lambda_0}(x)]_{\R^n/\Z^n}=w_{\lambda_0}(x)$, and $\tilde{w}_{\lambda_0}
(\sigma_{\lambda_0}(\pi(x)))=0\in\R^n$.
\end{lemma}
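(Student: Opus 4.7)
The plan is to reduce the statement to the covering-space lifting theorem for the quotient map $q:\R^n\to\R^n/\Z^n$, together with the observation that the image of $\sigma_{\lambda_0}$ is connected. Because $(X,\U,s)$ is in the standard setup, Lemma~\ref{geoconvex} tells me that $U_{\lambda_0}$ is geodesically convex, hence contractible, in particular connected and simply connected. Since $q$ is a covering map, I obtain a continuous lift $\tilde w:U_{\lambda_0}\to\R^n$ of $w_{\lambda_0}|_{U_{\lambda_0}}$, and because $U_{\lambda_0}$ is connected, any two such lifts differ by a constant element of $\Z^n$.

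Next I would fix the normalization. The section $\sigma_{\lambda_0}$ maps $\pi(U_{\lambda_0})$ into $U_{\lambda_0}$, so $\Sigma:=\sigma_{\lambda_0}(\pi(U_{\lambda_0}))\subset U_{\lambda_0}$, and the defining identity $w_{\lambda_0}(x)^{-1}x=\sigma_{\lambda_0}(\pi(x))$ immediately forces $w_{\lambda_0}|_\Sigma\equiv 0\in\R^n/\Z^n$. The key observation is that $\Sigma$ is the continuous image of the geodesically convex (hence connected) set $\pi(U_{\lambda_0})$, so $\Sigma$ is itself connected. Consequently $\tilde w|_\Sigma$ is a continuous map into the discrete fibre $q^{-1}(0)=\Z^n$, hence constant, equal to some $n_0\in\Z^n$. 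Setting $\tilde w_{\lambda_0}:=\tilde w-n_0$ then produces a continuous lift of $w_{\lambda_0}|_{U_{\lambda_0}}$ that vanishes identically on $\Sigma$, which is exactly the second condition of the lemma.

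For uniqueness, any competing lift satisfying the same two conditions would differ from $\tilde w_{\lambda_0}$ by a constant in $\Z^n$ on the connected set $U_{\lambda_0}$, and evaluating at any point of $\Sigma$ forces this constant to vanish. I do not expect any real obstacle here: the content of the lemma is that the two geometric ingredients already built into the standard setup---geodesic convexity of $U_{\lambda_0}$ and the distinguished section $\sigma_{\lambda_0}$---combine with the covering structure of $\R^n\to\R^n/\Z^n$ to single out a canonical $\R^n$-valued lift of $w_{\lambda_0}$ over $U_{\lambda_0}$.
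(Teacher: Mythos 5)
Your proof is correct and takes essentially the same route as the paper: the paper phrases the existence of the lift via the sheaf-cohomological exact sequence $C(U_{\lambda_0},\R^n)\to C(U_{\lambda_0},\T^n)\to\check{H}^1(U_{\lambda_0},\underline{\Z}^n)$ and the vanishing of $\check{H}^1$ over a contractible set, which is exactly your covering-space lifting criterion in different clothing; for the normalization the paper subtracts $\tilde{w}_{\lambda_0}(\sigma_{\lambda_0}(\pi(x)))$ while you subtract the constant $n_0$, and these coincide for the reason you give ($w_{\lambda_0}\circ\sigma_{\lambda_0}\equiv 0$ forces $\tilde{w}_{\lambda_0}\circ\sigma_{\lambda_0}$ to be $\Z^n$-valued and hence, on the connected set $\pi(U_{\lambda_0})$, constant). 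If anything, your write-up is slightly more explicit about why the normalized map is still a lift, a point the paper leaves to the reader.
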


\begin{proof}
Consider the exact sequence
\[C(U_{\lambda_0},\Z^n)\to C(U_{\lambda_0},\R^n)\to C(U_{\lambda_0},\T^n)\to
 \check{H}^1(U_{\lambda_0},\Z^n).\]
Then $w_{\lambda_0}|_{U_{\lambda_0}}$ has a lift to a continuous function $
\tilde{w}_{\lambda_0}:U_{\lambda_0}\to \R^n$ if and only if the image of $w_{\lambda_0}$ 
under the connecting homomorphism
$C(U_{\lambda_0},\T^n)\to \check{H}^1(U_{\lambda_0},\Z^n)$
is trivial. But, we have a locally constant sheaf over a contractible space $U_{\lambda_0}$  
because $\U$ is good, and thus we conclude $\tilde{w}_{\lambda_0}$ exists.

For uniqueness and the last condition, we just pick any $\tilde{w}_{\lambda_0}$ and 
consider $x\mapsto\tilde{w}_{\lambda_0}(x)-\tilde{w}_{\lambda_0}(\sigma_{\lambda_0}(\pi(x)))$.
\end{proof}

Suppose that $(X,\mathcal{U},s)$ is in the ``standard setup", and let 
$(A,\alpha)\in\mathfrak{Br}_{\R^n}(X)$ have image $[\phi]=[\phi^{20},\phi^{11},\phi^{02}]\in
 \mathbb{H}^2_{\partial s}(\pi(\U),\mathcal{S})$ under $\Xi_{\U,s}$. 
By the construction of $\Xi_{\U,s}$, we may suppose that there exist maps
\begin{equation}\label{damnunitaries}v_{\lambda_0}^\bullet:\,\Z^n\to 
C(\pi(U_{\lambda_0}),U(\H)), \quad
v_{\lambda_0\lambda_1}\in C(\pi(U_{\lambda_0})\cap \pi(U_{\lambda_1}),U(\H))
\end{equation}
such that $\beta^{\alpha,\Phi}_{(\lambda_0(-s_{\lambda_0\lambda_1}(z),z)\lambda_1)}=
\operatorname{Ad}v_{\lambda_0\lambda_1}(z)$, 
$\beta^{\alpha,\Phi}_{(\lambda_0(m,z)\lambda_0)}=\operatorname{Ad}v_{\lambda_0}^m(z)$ and
\begin{align}
\label{eq1}\phi^{02}_{\lambda_0}(z)_{ij}=&v_{\lambda_0}^{e_j}(\sigma_{\lambda_0}
(z))v_{\lambda_0}^{e_i}(\sigma_{\lambda_0}(z))v_{\lambda_0}^{e_j}(\sigma_{\lambda_0}
(z))^*v_{\lambda_0}^{e_i}(\sigma_{\lambda_0}(z))^*,\\
\label{eq2}\phi^{11}_{\lambda_0\lambda_1}(m,z)=&v_{\lambda_1}^m(z)
v_{\lambda_0\lambda_1}(z)v_{\lambda_0}^m(z)^*v_{\lambda_0\lambda_1}(z)^*,\\
\label{eq3}\phi^{20}_{\lambda_0\lambda_1\lambda_2}(z)=&v_{\lambda_1\lambda_2}
(z)v_{\lambda_0\lambda_1}(z)v_{\lambda_0}^{-\check\partial 
s_{\lambda_0\lambda_1\lambda_2}(z)}(z)^*v_{\lambda_0\lambda_2}(z)^*.
\end{align}
We define a C*-algebra $A_{\phi}$ as
\begin{align*}
A_{\phi}:=&\big\{\xi=\{\xi_{\lambda_0}\}:\xi_{\lambda_0}\in\operatorname{Ind}_{\Z^n}^{\R^n}
(C_0(\pi(U_{\lambda_0}),\K),\operatorname{Ad}v_{\lambda_0}^\bullet) \mbox{ and}\\
& \operatorname{Ad}v_{\lambda_0\lambda_1}(z)\big[\xi_{\lambda_0}(t)(z)\big]=
\xi_{\lambda_1}(t+s_{\lambda_0\lambda_1}(z))(z), \\ & \quad \forall z\in \pi(U_{\lambda_0})\cap 
\pi(U_{\lambda_1}),t\in\R^n\big\}.
\end{align*}
A computation shows this algebra is well-defined. Moreover, it comes with an action of $\R^n$ given by
\[\alpha'_s(\xi)_{\lambda_0}(t)(z):= \xi_{\lambda_0}(t-s)(z),\]
and we claim that $\Xi_{\U,s}[A_{\phi},\alpha']=[\phi^{20},\phi^{11},\phi^{02}]$. 
For the next proposition, we will realise $X$ via the isomorphism $X\cong 
\coprod_{\lambda_0}\pi(U_{\lambda_0})\times \T^n /\sim$, where $\sim$ is the 
equivalence relation
\begin{equation}\label{Xequivalence}
(\lambda_0,z,[t])\sim (\lambda_1,z,[t+s_{\lambda_0\lambda_1}(z)])\quad 
\mbox{ for all } z\in \pi(U_{\lambda_0})\cap \pi(U_{\lambda_1}).
\end{equation}

\begin{proposition}
Let $\{\xi_{\lambda_0}\}\in A_{\phi}$, and define $M(t,\lambda_0,z):\xi\mapsto 
\xi_{\lambda_0}(t)(z)$. Then the map $(\lambda_0,z,t)\mapsto M(\lambda_0,z,t)$ 
induces a homeomorphism of $X$ onto $\hat{A_\phi}$.
\end{proposition}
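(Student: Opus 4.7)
The plan is to reduce the statement to the known spectrum computation for induced algebras (cf.\ \cite[Prop.~6.16]{RaeWill98}) by working locally, and then to verify that the gluing relation $\sim$ from (\ref{Xequivalence}) matches exactly the unitary equivalence of the local evaluation representations. First I would show that each evaluation $M(t,\lambda_0,z):A_\phi\to\K$ is a surjective $*$-homomorphism; composing with the identity representation of $\K$ on $\H$ produces an irreducible representation of $A_\phi$, giving a map $\coprod_{\lambda_0}\pi(U_{\lambda_0})\times\R^n\to\hat{A_\phi}$.

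Next I would check that this map descends to $X$. For the $\Z^n$-action, the induced-algebra condition on $\xi_{\lambda_0}$ together with Eqn.~(\ref{chapter0eq0}) yields
\[
M(t+m,\lambda_0,z)(\xi)=\xi_{\lambda_0}(t+m)(z)=\operatorname{Ad}v_{\lambda_0}^{-m}(z)[\xi_{\lambda_0}(t)(z)],
\]
so the representations at $t$ and $t+m$ are unitarily equivalent, giving a well-defined map on $\pi(U_{\lambda_0})\times\T^n$. For the gluing, if $(\lambda_0,z,[t])\sim(\lambda_1,z,[t+s_{\lambda_0\lambda_1}(z)])$ then the defining compatibility of $A_\phi$ gives
\[
M(t+s_{\lambda_0\lambda_1}(z),\lambda_1,z)(\xi)=\operatorname{Ad}v_{\lambda_0\lambda_1}(z)\bigl[M(t,\lambda_0,z)(\xi)\bigr],
\]
so the two evaluations agree up to conjugation by $v_{\lambda_0\lambda_1}(z)\in U(\H)$ and define the same point of $\hat{A_\phi}$. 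Hence we obtain a well-defined map $\Theta:X\to\hat{A_\phi}$.

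The core of the proof is bijectivity. For injectivity, note that the $C_0(Z)$-algebra structure of $A_\phi$ (via $\xi\cdot f$ pointwise in $z$) ensures that distinct $z$'s give representations with distinct kernels, and within a fixed fibre $\pi^{-1}(z)\cap\pi^{-1}(\pi(U_{\lambda_0}))\cong\T^n$, injectivity follows from the corresponding statement for the induced algebra $\operatorname{Ind}_{\Z^n}^{\R^n}(C_0(\pi(U_{\lambda_0}),\K),\operatorname{Ad}v_{\lambda_0}^\bullet)$, using \cite[Prop.~6.16]{RaeWill98}. For surjectivity, I would localise: any irreducible representation $\rho$ of $A_\phi$ factors through $A_\phi|_{\pi(U_{\lambda_0})}$ for some $\lambda_0$, and one checks directly (restricting the definition) that the restriction map
\[
A_\phi|_{\pi(U_{\lambda_0})}\longrightarrow \operatorname{Ind}_{\Z^n}^{\R^n}(C_0(\pi(U_{\lambda_0}),\K),\operatorname{Ad}v_{\lambda_0}^\bullet),\qquad \xi\mapsto \xi_{\lambda_0},
\]
is a $C_0$-isomorphism; the spectrum identification for the target \cite[Prop.~6.16]{RaeWill98} then expresses $\rho$ as $M(t,\lambda_0,z)$ for some $(z,t)$.

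Continuity of $\Theta$ is immediate from the fact that $(\lambda_0,z,t)\mapsto \|\xi_{\lambda_0}(t)(z)\|$ is continuous for each $\xi\in A_\phi$, which shows preimages of Jacobson-topology closed sets are closed. Since $X$ is locally compact Hausdorff, $\hat{A_\phi}$ is Hausdorff (it is a continuous trace algebra, being $C_0(Z)$-locally isomorphic to $C_0(\pi(U_{\lambda_0})\times\T^n,\K)$ by the above restriction argument), and $\Theta$ is a continuous bijection; local compactness together with the local induced-algebra identification upgrades this to a homeomorphism (the inverse is continuous on each $\pi^{-1}(\pi(U_{\lambda_0}))$ by \cite[Prop.~6.16]{RaeWill98}, and these patches are open). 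The main obstacle I anticipate is pinning down the local $C_0$-isomorphism between $A_\phi|_{\pi(U_{\lambda_0})}$ and the induced algebra cleanly enough that the spectrum identification transports correctly through the gluing $\sim$; once that local-to-global dictionary is set up, everything reduces to bookkeeping with Eqns.~(\ref{eq1})--(\ref{eq3}) and the Raeburn--Williams result.
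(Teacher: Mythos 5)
Your proof is correct and follows essentially the same approach as the paper's: both reduce to \cite[Prop 6.16]{RaeWill98} for the local identification of the spectrum of the induced algebra $\operatorname{Ind}_{\Z^n}^{\R^n}(C_0(\pi(U_{\lambda_0}),\K),\operatorname{Ad}v_{\lambda_0}^\bullet)$ with $\pi(U_{\lambda_0})\times\T^n$, and then observe that the defining compatibility condition of $A_\phi$ makes the representations $M(t,\lambda_0,z)$ and $M(t+s_{\lambda_0\lambda_1}(z),\lambda_1,z)$ unitarily equivalent via $v_{\lambda_0\lambda_1}(z)$, so the local pieces glue along $\sim$ to a homeomorphism $X\cong\hat{A_\phi}$. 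The paper's version is terser (asserting the unitary equivalence "follows from the definition of $A_\phi$" and leaving the global patching implicit), while you spell out the descent, injectivity, surjectivity, and continuity steps explicitly, but the strategy and the key inputs are the same.
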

\begin{proof}
By \cite[Prop 6.16]{RaeWill98} we have a homeomorphism
\[M:\coprod_{\lambda_0}\pi(U_{\lambda_0})\times \T^n\to \coprod_{\lambda_0} 
\mbox{Ind}_{\Z^n}^{\R^n}(C_0(\pi(U_{\lambda_0}),\K),\operatorname{Ad}v_{\lambda_0}^\bullet)\,
\widehat{{}}\]
It therefore suffices to show that the representation $M(\lambda_0,z,t)$ is unitarily equivalent to
 $M(\lambda_1,z,t+s_{\lambda_0\lambda_1}(z))$, but this follows from the definition of $A_\phi$.
\end{proof}

\begin{proposition}
The image of $[A_{\phi},\alpha']$ under  $\Xi_{\U,s}$ equals $[\phi^{20},\phi^{11},\phi^{02}]$, and 
$(A,\alpha)$ is outer conjugate to $(A_{\phi},\alpha')$.
\end{proposition}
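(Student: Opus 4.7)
The proposition makes two assertions, and the plan is to dispatch them in turn.

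For the first claim, the strategy is to pick canonical local trivialisations of $A_\phi$ and unitary lifts so that the prescription (\ref{phi02defn})--(\ref{phi20defn}) reproduces $(\phi^{20},\phi^{11},\phi^{02})$ essentially by inspection. Concretely, I would define
$$\Phi^\phi_{\lambda_0}:A_\phi|_{U_{\lambda_0}}\to C_0(U_{\lambda_0},\K),\qquad \Phi^\phi_{\lambda_0}(\xi)(x):=\xi_{\lambda_0}(\tilde w_{\lambda_0}(x))(\pi(x)),$$
using the canonical $\R^n$-valued lift $\tilde w_{\lambda_0}$ of Lemma \ref{Rnvaluedlifts}; the previous proposition (identifying $\widehat{A_\phi}$ with $X$) ensures these are genuine $C_0(U_{\lambda_0})$-isomorphisms. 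The natural choices of unitary lifts, pulled back from the base, are $u^{e_i}_{\lambda_0}(x):=v^{e_i}_{\lambda_0}(\pi(x))$ and $u_{\lambda_0\lambda_1}(z):=v_{\lambda_0\lambda_1}(z)$, with $v^\bullet,v_{\bullet\bullet}$ the unitaries from (\ref{damnunitaries}). A short computation using the induced-algebra equivariance $\xi_{\lambda_0}(t-e_i)(z)=\operatorname{Ad} v_{\lambda_0}^{e_i}(z)[\xi_{\lambda_0}(t)(z)]$ shows that $\Phi^\phi_{\lambda_0}\circ\alpha'_{e_i}\circ(\Phi^\phi_{\lambda_0})^{-1}=\operatorname{Ad} u^{e_i}_{\lambda_0}$. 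For the transition lifts, tracking $g^{-1}x=s_{\lambda_0\lambda_1}(z)\cdot\sigma_{\lambda_1}(z)=\sigma_{\lambda_0}(z)$ and exploiting the vanishing $\tilde w_{\lambda_i}(\sigma_{\lambda_i}(z))=0$ together with the gluing relation defining $A_\phi$ yields
$$\beta^{\alpha',\Phi^\phi}_{(\lambda_0(-s_{\lambda_0\lambda_1}(z),\sigma_{\lambda_1}(z))\lambda_1)}=\operatorname{Ad} v_{\lambda_0\lambda_1}(z).$$
Substituting these unitaries into (\ref{phi02defn}), (\ref{phi11defn}) and (\ref{phi20defn}) immediately returns the given cocycle, matching (\ref{eq1})--(\ref{eq3}).

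For the outer conjugacy claim, by hypothesis $\Xi_{\U,s}[A,\alpha]=[\phi^{20},\phi^{11},\phi^{02}]$, and by the computation just sketched $\Xi_{\U,s}[A_\phi,\alpha']=[\phi^{20},\phi^{11},\phi^{02}]$ as well. Since Theorem \ref{MainSquare} asserts that $\Xi_{\U,s}$ is an isomorphism on $\operatorname{Br}_{\R^n}(X)$, it is in particular injective on outer-conjugacy classes, so $(A,\alpha)$ and $(A_\phi,\alpha')$ must be outer conjugate.

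The main obstacle is the first computation, specifically handling the evaluation-point shift in the definition of $\beta^{\alpha',\Phi^\phi}$: one must use the vanishing property $\tilde w_{\lambda_0}(\sigma_{\lambda_0}(z))=0$ to reduce the transition calculation to the gluing identity at $t=0$, and one must also verify that $\operatorname{Ad} v_{\lambda_0}^\bullet$ really does factor through an action of $\Z^n$ on $C_0(\pi(U_{\lambda_0}),\K)$, which is precisely where the scalar cocycle $\phi^{02}$ disappears in passing to $\operatorname{Aut}\K$. Once this bookkeeping is in place the identification of cocycles is tautological, because $A_\phi$ was engineered so that (\ref{eq1})--(\ref{eq3}) hold on the nose.
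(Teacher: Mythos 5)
Your proof is correct and follows essentially the same route as the paper: define the local trivialisations $\Phi_{\lambda_0}(\xi)(x):=\xi_{\lambda_0}(\tilde w_{\lambda_0}(x))(\pi(x))$, compute $\beta^{\alpha',\Phi}$ at the basis elements of $\Z^n$ and at the transition arrows $(-s_{\lambda_0\lambda_1}(z),\sigma_{\lambda_1}(z))$ to recover $\operatorname{Ad}v^{m}_{\lambda_0}$ and $\operatorname{Ad}v_{\lambda_0\lambda_1}$ respectively, then read off the cocycle from Eqns.~(\ref{phi02defn})--(\ref{phi20defn}) and invoke injectivity of $\Xi_{\U,s}$ for the outer-conjugacy claim. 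The paper fixes $T\in\K$ and evaluates $\beta^{\alpha',\Phi}$ only at section points $\sigma_{\lambda_0}(z)$, while you note that the lift $u^{e_i}_{\lambda_0}:=v^{e_i}_{\lambda_0}\circ\pi$ works over all of $U_{\lambda_0}$; this is a cosmetic rather than substantive difference, since $\Xi_{\U,s}$ only depends on the values along the section.
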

\begin{proof}
We have local isomorphisms $\Phi_{\lambda_0}:A_{\phi}|_{U_{\lambda_0}}\to 
C_0(U_{\lambda_0},\K)$ given by
\[\Phi_{\lambda_0}(\xi)(x):=\xi_{\lambda_0}(\tilde{w}_{\lambda_0}(x))(\pi(x)).\]
Fix $T\in \K$ and let $\xi$ be any element such that $T=\Phi_{\lambda_0}(\xi)
(\sigma_{\lambda_0}(z))=\xi_{\lambda_0}(\tilde{w}_{\lambda_0}
(\sigma_{\lambda_0}(z)))(z)=\xi_{\lambda_0}(0)(z)$. 
Observe
\begin{align*}
\beta^{\alpha,\Phi}_{\lambda_0(m,\sigma_{\lambda_0}(z))\lambda_0}(T)=&
\Phi_{\lambda_0}(\alpha_m\xi)(\sigma_{\lambda_0}(z))=
(\alpha_m\xi)_{\lambda_0}(\tilde{w}_{\lambda_0}(\sigma_{\lambda_0}(z)))(z)\\
=&\xi_{\lambda_0}(0-m)(z)=\operatorname{Ad}v_{\lambda_0}^m[\xi_{\lambda_0}(0)(z)]=
\operatorname{Ad}v_{\lambda_0}^m[T].
\end{align*}
Also, if $
T=\Phi_{\lambda_0}(\xi)(s_{\lambda_0\lambda_1}(\pi(x))\cdot \sigma_{\lambda_1}(z))
=\xi_{\lambda_0}(0)(z)
$,
we have
\begin{align*}
&\beta^{\alpha,\Phi}_{\lambda_0(-s_{\lambda_0\lambda_1}(z),\sigma_{\lambda_1}(z))\lambda_1}(T)
=\Phi_{\lambda_1}(\alpha_{-s_{\lambda_0\lambda_1}(z)}\xi)(\sigma_{\lambda_1}(z))
\\ \quad =& (\alpha_{-s_{\lambda_0\lambda_1}(z)}\xi)_{\lambda_1}(0)(z)
= \xi_{\lambda_1}(s_{\lambda_0\lambda_1}(z))(z)=\operatorname{Ad}v_{\lambda_0\lambda_1}(z)
\big[\xi_{\lambda_0}(0)(z)\big] \\ \quad = & \operatorname{Ad}v_{\lambda_0\lambda_1}(z)[T].
\end{align*}
It follows immediately from the construction of $\Xi_{\U,s}$ that 
$\Xi_{\U,s}[A_{\phi},\alpha']=[\phi^{20},\phi^{11},\phi^{02}]$, and the final statement of the 
proposition is true since $\Xi_{\U,s}$ is injective.

\end{proof}

\begin{remark}
Note that the construction of $A_{\phi}$ requires the existence of unitary valued maps 
satisfying Equations (\ref{eq1})-(\ref{eq3}), which are known to exists by a difficult and 
indirect argument showing $\Xi_{\U,s}$ is surjective (see
\cite{BouCarRat2}). We are as yet unaware of a direct proof.
\end{remark}

\begin{acknowledgement}
PB and AC would like to thank the organizers of the RIMS International Conference on
Noncommutative Geometry and Physics for the invitation and hospitality.
This research was supported under Australian Research Council's Discovery
Projects funding scheme (project numbers DP0559415 and DP0878184).
\end{acknowledgement}



\printindex
\end{document}